\newtheorem{theorem}{Theorem}[section]
\newtheorem{claim}{Claim}
\newtheorem{proposition}[theorem]{Proposition}
\newtheorem{corollary}[theorem]{Corollary}
\newtheorem{lemma}[theorem]{Lemma}
\theoremstyle{definition}
\def\t_c{\tau_{comp}}
\def \eps{\varepsilon}
\begin{document}

\title[On topological McAlister semigroups]
{On topological M\MakeLowercase{c}Alister semigroups}

\author{Serhii Bardyla}

\address{Universit\"at Wien,
Institut f\"ur Mathematik, Kurt G\"odel Research Center, Wien, Austria.}
\email{sbardyla@yahoo.com}

\makeatletter
\@namedef{subjclassname@2020}{%
  \textup{2020} Mathematics Subject Classification}
\makeatother

\subjclass[2020]{22A15, 20M18}
\keywords{McAlister semigroups, semitopological semigroup, locally compact semigroup, automorphism}

\thanks{The author was supported by
the Austrian Science Fund FWF (Grant M 2967).}

\begin{abstract}
In this paper we consider McAlister semigroups over arbitrary cardinals and investigate their algebraic and topological properties. We show that the group of automorphisms of a McAlister semigroup $\mathcal{M}_{\lambda}$ is isomorphic to the direct product $Sym(\lambda){\times}\mathbb{Z}_2$, where $Sym(\lambda)$ is the group of permutations of the cardinal $\lambda$.
This fact correlates with the result of Mashevitzky, Schein and Zhitomirski which states that the group of automorphisms of the free inverse semigroup over a cardinal $\lambda$ is isomorphic to the wreath product of $Sym(\lambda)$ and $\mathbb{Z}_2$.
Each McAlister semigroup admits a compact semigroup topology. Consequently, the Green's relations $\mathscr D$ and $\mathscr J$ coincide in McAlister semigroups. The latter fact complements results of Lawson. We showed that each non-zero element of a Hausdorff semitopological McAlister semigroup is isolated. This fact is an analogue of the result of Mesyan, Mitchell, Morayne and P\'{e}resse, who proved that each non-zero element of Hausdorff topological polycyclic monoid is isolated. Also, it follows that the free inverse semigroup over a singleton admits only the discrete Hausdorff shift-continuous topology. We proved that a Hausdorff locally compact semitopological semigroup $\mathcal{M}_1$ is either compact or discrete. This fact is similar to the result of Gutik, who showed that a Hausdorff locally compact semitopological polycyclic monoid $\mathcal{P}_1$ is either compact or discrete. However, this dichotomy does not hold for the semigroup $\mathcal{M}_2$. Moreover, $\mathcal{M}_2$ admits continuum many different Hausdorff locally compact inverse semigroup topologies.
 \end{abstract}

\maketitle

\section{Introduction and Preliminaries}
We shall follow the terminology
of~\cite{Engelking-1989, Lawson-1998}.
All topological spaces are assumed to be Hausdorff. A semigroup $S$ is called an {\em inverse semigroup} if for each element $a\in S$ there exists a unique inverse element $a^{-1}\in S$ such that $aa^{-1}a=a$ and $a^{-1}aa^{-1}=a^{-1}$.
The map $inv:S\rightarrow S$, $a\rightarrow a^{-1}$ is called an {\em inversion}.

A {\em topologized semigroup} is a topological space endowed with a semigroup operation. A topologized semigroup $S$ is called
\begin{itemize}
\item {\em semitopological}, if for each element $x\in S$ the shifts $l_x(s):s\rightarrow xs$ and $r_x(s):s\rightarrow sx$ are continuous;
\item{\em topological}, if the semigroup operation is continuous in $S$;
\item{\em topological inverse}, if $S$ is an inverse semigroup, and semigroup operation together with inversion are continuous in $S$.
\end{itemize}

A topology $\tau$ on a semigroup $S$ is called
\begin{itemize}
\item {\em shift-continuous}, if $(S,\tau)$ is a semitopological semigroup;
\item{\em semigroup}, if $(S,\tau)$ is a topological semigroup;
\item{\em inverse semigroup}, if $(S,\tau)$ is a topological inverse semigroup.
\end{itemize}


A finite sequence of elements of a nonempty set $A$ is called {\em a word over} $A$. The empty word is denoted by $\eps$.
For a non-zero cardinal $\lambda$ by $F_{\lambda}$ we denote the free monoid over a set $A$ of cardinality $\lambda$. Note that $F_{\lambda}$ can be represented as a set of all words over the set $A$ endowed with the semigroup operation of concatenation. More precisely, the concatenation of two words $a=a_1\ldots a_n\in F_{\lambda}$ and $b=b_1\ldots b_m\in F_{\lambda}$ is the word $ab=a_1\ldots a_nb_1\ldots b_m\in F_{\lambda}$. We agree that $\eps a=a\eps=a$ for each $a\in F_{\lambda}$, that is $\eps$ is the unit of $F_{\lambda}$.
By $|a|$ we denote the length of a word $a\in F_{\lambda}\setminus\{\eps\}$. We assume that $|\eps|=0$. If $a=a_1\ldots a_n\in F_{\lambda}$, then let $a^t=a_n\ldots a_1$. Also, we agree that $a^t=a$ for any $a\in F_{\lambda}$ such that $|a|\leq 1$.

For a non-zero cardinal $\lambda$ the polycyclic monoid $\mathcal{P}_\lambda$ is the set $F_{\lambda}{\times}F_{\lambda}\sqcup\{0\}$ endowed with the following semigroup operation:

\begin{equation*}
\begin{split}
  & (a,b)\cdot (c,d)=
    \left\{
      \begin{array}{ccl}
       (c_{1}a,d), & \hbox{if~~} c=c_{1}b & \hbox{for some~} c_1\in F_{\lambda};\\
        (a,b_{1}d),   & \hbox{if~~} b=b_{1}c & \hbox{for some~} b_1\in F_{\lambda};\\
        0,              & \hbox{otherwise;}   &
      \end{array}
    \right.\\
  &  \hbox{~and~} \quad (a,b)\cdot 0=0\cdot (a,b)=0\cdot 0=0.
    \end{split}
\end{equation*}
Polycyclic monoids over finite cardinals were introduced by Nivat and Perrot in~\cite{Nivat-Perrot-1970}. Basic algebraic properties of polycyclic monoids are described in Chapter~9.3 from~\cite{Lawson-1998}. In~\cite{BardGut-2016(1)} Gutik and the author investigated algebraic properties of polycyclic monoids over arbitrary cardinals. It turns out that they share many common properties with the classical polycyclic monoids. Nowadays polycyclic monoids are well-studied algebraic objects
(see~\cite{Jones-Lawson-2012,Lawson-2009}) and have fruitful applications.  In particular, they are universal objects in the class of graph inverse semigroups. Namely, each graph inverse semigroup $G(E)$ over a directed graph $E$ is a subsemigroup of the polycyclic monoid $\mathcal{P}_{|G(E)|}$ ~\cite{Bardyla-2017(2)}. Also, polycyclic monoids are useful in a construction of Thompson groups (see ~\cite{Lawson-2007, Lawson-2020}).

It is easy to see that the set $I=\mathcal{P}_{\lambda}{\times}\{0\}\cup \{0\}{\times}\mathcal{P}_{\lambda}$ is a two-sided ideal of the direct product $\mathcal{P}_{\lambda}{\times}\mathcal{P}_{\lambda}$. Let $\mathcal{S}_{\lambda}$ denotes the Rees quotient semigroup $(\mathcal{P}_{\lambda}{\times}\mathcal{P}_{\lambda})/I$. By $0$ we naturally denote the point $q(I)\in \mathcal{S}_{\lambda}$, where $q$ is the quotient map. The subsemigroup $\mathcal{M}_{\lambda}=\{((a,b),(c,d))\mid \varepsilon\neq ac^t=bd^t\}\cup\{0\}$ of $\mathcal{S}_{\lambda}$ is called the {\em McAlister semigroup}. The semigroups $\mathcal{M}_k$, $k\in\mathbb{N}$ were introduced by McAlister in~\cite{M}.
McAlister semigroups are closely related to tiling semigroups constructed from a one-dimensional tiling. Also, they can be considered as a generalization of the free inverse semigroup with one generator, because the semigroup $\mathcal{M}_1$ is isomorphic to the free inverse semigroup over a singleton with adjoined zero~\cite{L}.
For alternative representations (which do not use polycyclic monoids) and other algebraic properties of McAlister semigroups see Chapter~9.4 from~\cite{Lawson-1998} or papers of Lawson and McAlister~\cite{L,M}.

Let us note that polycyclic monoids as well as McAlister semigroups are closely related to the well-known bicyclic monoid $\mathcal{B}$. In particular, the polycyclic monoid $\mathcal{P}_1$ is isomorphic to the bicyclic monoid with adjoined zero. Topological bicyclic monoid was investigated by Eberhart and Selden~\cite{Eberhart-Selden-1969}. They showed that the bicyclic semigroup admits only the discrete semigroup topology, and if ${\mathcal{B}}$ is a dense proper subsemigroup of a topological semigroup $S$, then $I=S\setminus{\mathcal{B}}$ is a two-sided ideal of $S$. Bertman and West~\cite{Bertman-West-1976} showed that the bicyclic monoid admits only the discrete shift-continuous topology.
Compact topological semigroups cannot contain an isomorphic copy of the bicyclic monoid~\cite{Anderson-Hunter-Koch-1965}.
Embedding of the bicyclic monoid into compact-like topological semigroups was discussed
in~\cite{Banakh-Dimitrova-Gutik-2010, GutRep-2007, Hildebrant-Koch-1988}.

Topological polycyclic monoids and their subsemigroups were investigated by Mesyan, Mitchell, Morayne and P\'{e}resse~\cite{Mesyan-Mitchell-Morayne-Peresse-2013}. In particular, they showed that each non-zero element of a topological graph inverse semigroup is isolated and a locally compact topological graph inverse semigroup over a finite graph $E$ is discrete. Since a polycyclic monoid $\mathcal{P}_{\lambda}$ is isomorphic to the graph inverse semigroup over the graph $E$ which contains one vertex and $\lambda$ many loops, their results imply that each non-zero element of a topological polycyclic monoid is isolated and locally compact topological polycyclic monoids $\mathcal{P}_k$, $k\in\mathbb{N}$ are discrete. Gutik and the author in~\cite{BardGut-2016(1)} showed that each non-zero element of a semitopological polycyclic monoid is isolated and for every non-zero cardinal $\lambda$ a locally compact topological polycyclic monoid $\mathcal{P}_{\lambda}$ is discrete. Graph inverse semigroups which admit only the discrete locally compact semigroup topology were characterized in~\cite{Bardyla-2017(1)}.

In this paper we investigate algebraic and topological properties of McAlister semigroups.

\section{Algebraic properties of McAlister semigroups}

\begin{lemma}\label{l0}
For each non-zero element $x\in\mathcal{M}_{\lambda}$ there exist words $u,v,w\in F_{\lambda}$ such that $x=((u,uv),(wv^t,w))$ or $x=((uv,u),(w,wv^t))$.
\end{lemma}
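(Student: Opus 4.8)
The plan is to unpack the defining condition of a non-zero element of $\mathcal{M}_{\lambda}$ and then split into two symmetric cases according to a comparison of word lengths. First I would write an arbitrary non-zero $x\in\mathcal{M}_{\lambda}$ as $x=((a,b),(c,d))$ with $a,b,c,d\in F_{\lambda}$ satisfying $ac^t=bd^t\neq\varepsilon$. This representation is legitimate: since $x\neq 0$ in $\mathcal{S}_{\lambda}=(\mathcal{P}_{\lambda}{\times}\mathcal{P}_{\lambda})/I$, the element does not lie in the ideal $I=\mathcal{P}_{\lambda}{\times}\{0\}\cup\{0\}{\times}\mathcal{P}_{\lambda}$, so both coordinates are genuine pairs of words rather than the zero of the polycyclic monoid, and the membership in $\mathcal{M}_{\lambda}$ supplies exactly the relation $ac^t=bd^t\neq\varepsilon$. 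Setting $z:=ac^t=bd^t$ and comparing lengths gives $|a|+|c|=|b|+|d|$.

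The key observation is that $a$ and $b$ are both initial segments of the single word $z$ (the first $|a|$, resp.\ $|b|$, letters of $z$), so the shorter of the two is a prefix of the longer. I would therefore distinguish two cases. If $|a|\leq|b|$, then $a$ is a prefix of $b$; put $u:=a$ and let $v$ be the word with $b=uv$. Substituting into $ac^t=bd^t$ and cancelling $u$ on the left (using that the free monoid $F_{\lambda}$ is left cancellative) yields $c^t=vd^t$, whence $c=(vd^t)^t=dv^t$ by the antihomomorphism identity $(pq)^t=q^tp^t$ together with involutivity $(p^t)^t=p$. Putting $w:=d$ gives $c=wv^t$ and $d=w$, so $x=((u,uv),(wv^t,w))$, the first desired form. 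The case $|a|\geq|b|$ is entirely symmetric: now $b$ is a prefix of $a$, so writing $a=uv$ with $u:=b$ and cancelling $u$ gives $d^t=vc^t$, hence $d=cv^t$; taking $w:=c$ produces $x=((uv,u),(w,wv^t))$, the second form.

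I do not expect a genuine obstacle, since the content is purely combinatorial. The only points requiring care are (i) the justification that the two initial segments $a,b$ of the common word $z$ are comparable as prefixes, which is precisely where the hypothesis $ac^t=bd^t$ is used, and (ii) the correct handling of the reversal operation, in particular the identity $(pq)^t=q^tp^t$ and the convention $p^t=p$ for $|p|\leq 1$. The degenerate subcase $|a|=|b|$ corresponds to $v=\varepsilon$ and is covered by either of the two forms simultaneously, so no separate argument is needed.
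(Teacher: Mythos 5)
Your proof is correct and takes essentially the same route as the paper's: a case split on whether $|a|\leq|b|$ or $|a|\geq|b|$, writing the longer of $a,b$ as the shorter one followed by a word $v$, and then recovering the corresponding relation between $c$ and $d$ (with the case $|a|=|b|$ giving $v=\varepsilon$). The only difference is presentational --- the paper matches letters index by index, while you use prefix comparability inside the common word $ac^t=bd^t$ together with left cancellation in $F_{\lambda}$ and the identity $(pq)^t=q^tp^t$, which is a cleaner way of carrying out the same computation.
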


\begin{proof}
Let $((a,b),(c,d))\in\mathcal{M}_{\lambda}\setminus\{0\}$, where $a=a_1\ldots a_n$, $b=b_1\ldots b_m$, $c=c_1\ldots c_k$, $d=d_1\ldots d_p$.
Recall that $ac^t=bd^t\neq \eps$ witnessing that $a_1\ldots a_nc_k\ldots c_1=b_1\ldots b_md_p\ldots d_1$.
Three cases are possible:
\begin{enumerate}
\item $n>m$ and $k<p$;
\item $n<m$ and $k>p$;
\item $n=m$ and $k=p$.
\end{enumerate}

1) It follows that $a_1=b_1,\ldots, a_m=b_m, a_{m+1}=d_p,\ldots, a_n=d_{p-n+m+1}, c_k=d_{p-n+m}, \ldots ,c_1=d_1$.
Put $v\equiv a_{m+1}\ldots a_n$, $u\equiv a_1\ldots a_m$ and $w\equiv c_1\ldots c_k$. Then $((a,b),(c,d))=((uv,u),(w,wv^t))$.

2) It follows that $b_1=a_1,\ldots, b_n=a_n, b_{n+1}=c_k,\ldots, b_m=c_{k-m+n+1}, d_p=c_{k-m+n}, \ldots, d_1=c_1$.
Put $v\equiv b_{n+1}\ldots b_m$, $u\equiv a_1\ldots a_n$ and $w\equiv d_1\ldots d_p$. Then $((a,b),(c,d))=((u,uv),(wv^t,w))$.

3) It follows that $a_i=b_i$ for any $i\leq n$ and $c_i=d_i$ for any $i\leq k$. Put $v\equiv \eps$, $u\equiv a$ and $w\equiv c$. Then $((a,b),(c,d))=((a,a),(c,c))=((a\eps,a),(c,c\eps^t))=((uv,u),(w,wv^t))$.
\end{proof}

For an inverse semigroup $S$ the Green's relations $\mathscr{L}$,
$\mathscr{R}$, $\mathscr{H}$, $\mathscr{D}$ and $\mathscr{J}$ are defined as follows:
\begin{center}
\begin{tabular}{rcl}
  $a\mathscr{L}b$ & if and only if & $a^{-1}a=b^{-1}b$; \\
  $a\mathscr{R}b$ & if and only if & $aa^{-1}=bb^{-1}$; \\
  $a\mathscr{J}b$ & if and only if & $SaS=SbS$;\\
  & $\mathscr{H}=\mathscr{L}\cap\mathscr{R}$;&\\
  & $\mathscr{D}=\mathscr{L}{\circ}\mathscr{R}=\mathscr{R}{\circ}\mathscr{L}$. &\\
\end{tabular}
\end{center}

The Green's relations $\mathscr{L},\mathscr{R},\mathscr{H}$ and $\mathscr{D}$ on McAlister semigroups $\mathcal{M}_n$, $n\in\mathbb{N}$ were described by Lawson in~\cite{L}.
The following lemma complements mentioned above results.
\begin{lemma}\label{l1}
Let $((a_1,b_1),(c_1,d_1))$ and $((a_2,b_2),(c_2,d_2))$ be arbitrary non-zero elements of a semigroup $\mathcal{M}_{\lambda}$. Then the following conditions hold:
\begin{enumerate}
\item $((a_1,b_1),(c_1,d_1))\mathscr{L}((a_2,b_2),(c_2,d_2))$ iff $b_1=b_2$ and $d_1=d_2$;
\item $((a_1,b_1),(c_1,d_1))\mathscr{R}((a_2,b_2),(c_2,d_2))$ iff $a_1=a_2$ and $c_1=c_2$;
\item $\mathscr{H}$-classes are singletons;
\item $((a_1,b_1),(c_1,d_1))\mathscr{D}((a_2,b_2),(c_2,d_2))$ iff $a_1c_1^t=a_2c_2^t$;
\item the relations $\mathscr{D}$ and $\mathscr{J}$ coincide on $\mathcal{M}_{\lambda}$.
\end{enumerate}
\end{lemma}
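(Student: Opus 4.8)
The plan is to compute the two idempotents $x^{-1}x$ and $xx^{-1}$ explicitly and read off the first three items, then bootstrap to the last two. Recalling that in $\mathcal{P}_{\lambda}$ the inverse of $(a,b)$ is $(b,a)$, the inverse of a non-zero $x=((a,b),(c,d))\in\mathcal{M}_{\lambda}$ is $x^{-1}=((b,a),(d,c))$. A direct application of the multiplication rule gives $x^{-1}x=((b,b),(d,d))$ and $xx^{-1}=((a,a),(c,c))$. Hence $x^{-1}x$ depends only on the pair $(b,d)$ and $xx^{-1}$ only on $(a,c)$, which yields items (1) and (2) at once; item (3) follows since $\mathscr{H}=\mathscr{L}\cap\mathscr{R}$ forces all four coordinates to agree, so each $\mathscr{H}$-class is a single point.

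For item (4) I would use $\mathscr{D}=\mathscr{R}\circ\mathscr{L}$. If $a_1c_1^t=a_2c_2^t$, then since $b_2d_2^t=a_2c_2^t$, the quadruple $z=((a_1,b_2),(c_1,d_2))$ satisfies $a_1c_1^t=b_2d_2^t\neq\eps$ and so lies in $\mathcal{M}_{\lambda}$; by items (1) and (2) we have $((a_1,b_1),(c_1,d_1))\,\mathscr{R}\,z\,\mathscr{L}\,((a_2,b_2),(c_2,d_2))$, giving the $\mathscr{D}$-relation. Conversely, any element $\mathscr{R}$-related to the first and $\mathscr{L}$-related to the second must have first-row data $(a_1,c_1)$ and second-row data $(b_2,d_2)$, and membership in $\mathcal{M}_{\lambda}$ forces $a_1c_1^t=b_2d_2^t=a_2c_2^t$. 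Thus the common value $\phi(x):=ac^t=bd^t$ is a complete invariant of $\mathscr{D}$-classes.

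For item (5) the inclusion $\mathscr{D}\subseteq\mathscr{J}$ is automatic (each non-zero $x$ equals $(xx^{-1})\,x\,(x^{-1}x)$, so $\mathscr{L}$ and $\mathscr{R}$ are both contained in $\mathscr{J}$, and $\mathscr{J}$ is transitive). The substance is the reverse inclusion, for which I would prove that $\phi$ is a $\mathscr{J}$-invariant. The key lemma is a monotonicity statement: whenever a product $yx$ is non-zero, one has $\phi(yx)=\alpha\,\phi(x)\,\beta$ for some words $\alpha,\beta\in F_{\lambda}$; in particular $\phi(x)$ is a factor of $\phi(yx)$ and $|\phi(yx)|\geq|\phi(x)|$. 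This is verified by splitting the polycyclic product into its two defining cases in each of the two coordinates and simplifying, using $b'd'^t=a'c'^t$ for the factor $y=((a',b'),(c',d'))$; in the four resulting cases one gets $\alpha,\beta\in\{\eps,q_1,q_2^t,\dots\}$ explicitly, and one checks en route that the product again satisfies the defining equation $AC^t=BD^t$ of $\mathcal{M}_{\lambda}$. Since $\phi(x^{-1})=\phi(x)$ and inversion is an anti-automorphism, the same statement for right multiplication $\phi(xy)=\alpha\,\phi(x)\,\beta$ follows formally. Iterating, every non-zero product $p\,x_2\,q$ satisfies $\phi(p\,x_2\,q)=\gamma\,\phi(x_2)\,\delta$, so $\phi(x_2)$ is a factor of $\phi(x_1)$ whenever $x_1\in\mathcal{M}_{\lambda}x_2\mathcal{M}_{\lambda}$, and hence $|\phi(x_1)|\geq|\phi(x_2)|$.

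Finally, if $x_1\mathscr{J}x_2$, i.e. $\mathcal{M}_{\lambda}x_1\mathcal{M}_{\lambda}=\mathcal{M}_{\lambda}x_2\mathcal{M}_{\lambda}$, then (as both $x_i$ lie in their own principal two-sided ideals) $x_1$ is a product through $x_2$ and conversely, so the length inequality applies in both directions and $|\phi(x_1)|=|\phi(x_2)|$; together with $\phi(x_2)$ being a factor of $\phi(x_1)$ of equal length, this forces $\phi(x_1)=\phi(x_2)$, and item (4) then gives $x_1\mathscr{D}x_2$. The main obstacle is precisely the factor lemma of the previous paragraph: the bookkeeping in the case analysis governing how the two polycyclic coordinates interact, and the verification that the output quadruple still satisfies $AC^t=BD^t$, is where all the genuine work lies, while the passage from that lemma to $\mathscr{D}=\mathscr{J}$ is purely formal.
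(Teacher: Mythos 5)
Your items (1)--(4) track the paper's own argument almost exactly: you compute the idempotents $x^{-1}x=((b,b),(d,d))$ and $xx^{-1}=((a,a),(c,c))$ to get (1)--(3), and for (4) you insert the intermediate element $((a_1,b_2),(c_1,d_2))$ where the paper inserts $((a_2,b_1),(c_2,d_1))$ --- the same idea with the roles of $\mathscr{L}$ and $\mathscr{R}$ swapped. The genuine divergence is item (5). The paper proves $\mathscr{D}=\mathscr{J}$ \emph{topologically}: it defers to the later construction of a compact topological inverse semigroup topology on $\mathcal{M}_{\lambda}$ (Theorem~\ref{c}) and invokes Proposition 2.19 of Hajji--Steinberg, which states that $\mathscr{D}$ and $\mathscr{J}$ coincide in every compact topological semigroup. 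You instead argue purely algebraically: the word $\phi(x)=ac^{t}$ is a complete $\mathscr{D}$-invariant by (4); your ``factor lemma'' (if $yx\neq 0$ then $\phi(yx)=\alpha\,\phi(x)\,\beta$) is precisely the paper's Proposition~\ref{p1}, which the paper proves by exactly the four-case analysis you sketch, so the computation you flag as the main obstacle is sound and not a gap; and the length argument (mutual factorization plus equal length forces $\phi(x_1)=\phi(x_2)$) then converts mutual $\mathscr{J}$-divisibility into $\mathscr{D}$-equivalence. The auxiliary points you use are also fine: $x=(xx^{-1})x(x^{-1}x)$ gives $x\in SxS$, hence $\mathscr{L},\mathscr{R}\subseteq\mathscr{J}$ even though the paper defines $\mathscr{J}$ via $SaS$ rather than $S^{1}aS^{1}$, and the $\mathscr{J}$-class and $\mathscr{D}$-class of $0$ are both $\{0\}$. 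Your route buys self-containedness: no topology, no external citation, and no forward reference (in the paper, Lemma~\ref{l1}(5) is only completed once Theorem~\ref{c} is proved, which is logically sound since Theorem~\ref{c} depends only on Lemma~\ref{l2} and Proposition~\ref{p1}, but it is structurally awkward). The paper's route, in exchange, gets item (5) essentially for free from a theorem it wants anyway, and highlights the interplay between compactness and Green's relations that is one of its themes.
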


\begin{proof}
Note that for any $((a,b),(c,d))\in\mathcal{M}_{\lambda}\setminus\{0\}$, $((a,b),(c,d))^{-1}=((b,a),(d,c))$.

1) Observe that
$$((a_1,b_1),(c_1,d_1))^{-1}\cdot((a_1,b_1),(c_1,d_1))=((b_1,a_1),(d_1,c_1))\cdot((a_1,b_1),(c_1,d_1))=((b_1,b_1),(d_1,d_1)).$$
Similarly it can be checked that $((a_2,b_2),(c_2,d_2))^{-1}\cdot((a_2,b_2),(c_2,d_2))=((b_2,b_2),(d_2,d_2))$.
By the definition of the relation $\mathscr{L}$,
$$((a_1,b_1),(c_1,d_1))\mathscr{L}((a_2,b_2),(c_2,d_2))\qquad \hbox{iff}\qquad  ((b_1,b_1),(d_1,d_1))=((b_2,b_2),(d_2,d_2)).$$
The latter equality is true iff $b_1=b_2$ and $d_1=d_2$.
Hence condition 1 holds.

2) Observe that
$$((a_1,b_1),(c_1,d_1))\cdot((a_1,b_1),(c_1,d_1))^{-1}=((a_1,b_1),(c_1,d_1))\cdot((b_1,a_1),(d_1,c_1))=((a_1,a_1),(c_1,c_1)).$$
Similarly it can be checked that $((a_2,b_2),(c_2,d_2))\cdot((a_2,b_2),(c_2,d_2))^{-1}=((a_2,a_2),(c_2,c_2))$.
By the definition of the relation $\mathscr{R}$,
$$((a_1,b_1),(c_1,d_1))\mathscr{R}((a_2,b_2),(c_2,d_2))\qquad \hbox{iff}  \qquad((a_1,a_1),(c_1,c_1))=((a_2,a_2),(c_2,c_2)).$$
The latter equality is true iff $a_1=a_2$ and $c_1=c_2$.
Hence condition 2 holds.

3) By the definition of the relation $\mathscr H$, $((a_1,b_1),(c_1,d_1))\mathscr{H}((a_2,b_2),(c_2,d_2))$ iff $$((a_1,b_1),(c_1,d_1))\mathscr{L}((a_2,b_2),(c_2,d_2))\quad \hbox{ and } \quad((a_1,b_1),(c_1,d_1))\mathscr{R}((a_2,b_2),(c_2,d_2)).$$ At this point condition 3 follows from conditions 1 and 2.

4) ($\Rightarrow$) Since $\mathscr{D}=\mathscr{L}{\circ}\mathscr{R}$ there exists an element $((a_3,b_3),(c_3,d_3))\in \mathcal{M}_{\lambda}\setminus\{0\}$ such that
$$((a_1,b_1),(c_1,d_1))\mathscr{L}((a_3,b_3),(c_3,d_3))\quad \hbox{ and }\quad ((a_3,b_3),(c_3,d_3))\mathscr{R}((a_2,b_2),(c_2,d_2)).$$
Conditions 1 and 2 imply that $b_1=b_3$, $d_1=d_3$ and $a_2=a_3$, $c_2=c_3$. Then $$a_1c_1^t=b_1d_1^t=b_3d_3^t=a_3c_3^t=a_2c_2^t.$$

($\Leftarrow$) Since $a_2c_2^t=a_1c_1^t=b_1d_1^t$, we have that $((a_2,b_1),(c_2,d_1))\in\mathcal{M}_{\lambda}\setminus\{0\}$. By conditions 1 and 2, $$((a_1,b_1),(c_1,d_1))\mathscr{L}((a_2,b_1),(c_2,d_1))\qquad \hbox{and}\qquad ((a_2,b_1),(c_2,d_1))\mathscr{R}((a_2,b_2),(c_2,d_2)).$$ Hence $((a_1,b_1),(c_1,d_1))\mathscr{D}((a_2,b_2),(c_2,d_2))$.

5) By Proposition 2.19 from~\cite{HS}, the relations $\mathscr D$ and $\mathscr J$ coincide on each compact topological semigroup $S$. So, it is sufficient to construct a compact semigroup topology on $\mathcal{M}_{\lambda}$. It will be done in Theorem~\ref{c} after some preparatory work made before.
\end{proof}

Observe that to each $\mathscr{D}$-class $D\subset \mathcal{M}_{\lambda}\setminus\{0\}$ corresponds a certain word $p\in F_{\lambda}\setminus\{\eps\}$. Namely, fix any $((a,b),(c,d))\in D$ and put $p=ac^t$. By Lemma~\ref{l1}, the word $p$ does not depend on the choice of $((a,b),(c,d))\in D$. Further, by $D_p$ we will denote the $\mathscr D$-class in $\mathcal{M}_{\lambda}$ which contains an element $((a,b),(c,d))$ with $ac^t=p$. The following lemma is an analogue of Proposition 8 from~\cite{L} but for McAlister semigroups over arbitrary cardinals.

\begin{lemma}\label{l2}
For each $p\in F_{\lambda}\setminus\{\eps\}$ the following statements hold:
\begin{itemize}
\item $|D_p|=(|p|+1)^2$;
\item if $e\leq f$ for some idempotents $e,f\in D_p$, then $e=f$.
 \end{itemize}
\end{lemma}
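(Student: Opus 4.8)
The plan is to describe the $\mathscr{D}$-class $D_p$ explicitly and then analyse its idempotents. Write $p=p_1\ldots p_n$ with $n=|p|$. By Lemma~\ref{l1}(4) together with the membership condition for $\mathcal{M}_{\lambda}$, a non-zero element $((a,b),(c,d))$ lies in $D_p$ if and only if $ac^t=bd^t=p$. Since $ac^t=p$ forces $a$ to be a prefix of $p$ and $c^t$ to be the complementary suffix, the pair $(a,c)$ is completely determined by the position at which $p$ is cut; there are exactly $|p|+1$ such positions (the cut may occur before $p_1$, after $p_n$, or between any two consecutive letters), and distinct positions yield distinct pairs because they force $|a|$ to differ. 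The same count applies independently to $(b,d)$, so $|D_p|=(|p|+1)^2$.

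For the second statement I would first identify the idempotents of $D_p$. Using $((a,b),(c,d))^{-1}=((b,a),(d,c))$ from the proof of Lemma~\ref{l1}, an element is idempotent precisely when it coincides with its own inverse, i.e. when it has the form $((a,a),(c,c))$; one checks directly (as in the computation $(a,a)(a,a)=(a,a)$ in $\mathcal{P}_{\lambda}$) that every such element with $ac^t\neq\eps$ is indeed idempotent. Hence the idempotents of $D_p$ are exactly the elements $e_i=((p_1\ldots p_i,\,p_1\ldots p_i),(p_n\ldots p_{i+1},\,p_n\ldots p_{i+1}))$ for $0\le i\le n$, with the conventions $p_1\ldots p_0=\eps$ and $p_n\ldots p_{n+1}=\eps$.

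Next I would compute the natural partial order on these idempotents. Since the operation on $\mathcal{M}_{\lambda}$ is inherited coordinatewise from $\mathcal{P}_{\lambda}{\times}\mathcal{P}_{\lambda}$, for idempotents $e=((a,a),(c,c))$ and $f=((a',a'),(c',c'))$ the relation $e\le f$ means $ef=e$, which splits into the two polycyclic conditions $(a,a)(a',a')=(a,a)$ and $(c,c)(c',c')=(c,c)$. A short calculation with the polycyclic multiplication shows that $(a,a)(a',a')=(a,a)$ holds exactly when $a'$ is a suffix of $a$, and likewise $(c,c)(c',c')=(c,c)$ holds exactly when $c'$ is a suffix of $c$. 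The point where care is needed is precisely here: the polycyclic operation compares words by the prefix relation, whereas $a,a'$ are prefixes of $p$, so one must track correctly that the resulting constraint is a suffix relation.

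The decisive observation is then a length count. Writing $i=|a|$ and $i'=|a'|$, the requirement that $a'$ be a suffix of $a$ gives $i'\le i$, while the requirement that $c'=p_n\ldots p_{i'+1}$ be a suffix of $c=p_n\ldots p_{i+1}$ forces $n-i'\le n-i$, i.e. $i\le i'$. Together these yield $i=i'$, whence $a=a'$ and $c=c'$, so $e=f$. I expect this last step --- recognising that the two suffix conditions pull the lengths in opposite directions --- to be the real content; everything else is bookkeeping with the definitions. In particular, no delicate combinatorics concerning borders of $p$ is required, since the bare length inequalities already collapse the comparison.
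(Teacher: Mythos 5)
Your proof is correct and takes essentially the same approach as the paper's: the counting is the identical cut-position argument, and for the second statement both proofs characterize the idempotents as the elements $((a,a),(c,c))$, turn $e\le f$ into two suffix conditions via the polycyclic multiplication, and finish with a length count (the paper's word equation $bd^t=p=a_1bd^tc_1^t$ forcing $a_1=c_1=\eps$ is exactly your pair of opposing length inequalities). One caution on phrasing only: ``an element is idempotent precisely when it coincides with its own inverse'' is not a valid general principle in inverse semigroups, but your argument is sound here because you verify directly that the elements $((a,a),(c,c))$ are idempotent, while the direction you actually need (idempotents are self-inverse) is a standard fact.
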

\begin{proof}
Observe that there are $|p|+1$ possibilities to represent the word $p\in\mathcal{F}_{\lambda}\setminus\{\eps\}$ as $ac^t$, where $a,c\in F_{\lambda}$. It follows that there are $(|p|+1)^2$ possibilities to construct an element $((a,b),(c,d))$ such that $ac^t=bd^t=p$. Thus the set $D_p$ contains $(|p|+1)^2$  elements. Note that idempotents of $\mathcal{M}_{\lambda}\setminus\{0\}$ have the form $((a,a),(c,c))$, where $ac^t\neq\eps$. Fix any two idempotents $((a,a),(c,c))\leq((b,b),(d,d))$ in $D_p$. It follows that $((a,a),(c,c))\cdot((b,b),(d,d))=((a,a),(c,c))$. The definition of the semigroup operation in $\mathcal{M}_{\lambda}$ implies that there exists $a_1,c_1\in F_{\lambda}$ such that $a=a_1b$ and $c=c_1d$. Since $bd^t=p=ac^t=a_1bd^tc_1^t$ we deduce that $a_1=c_1=\eps$. Hence $((a,a),(c,c))=((b,b),(d,d))$.
\end{proof}

\begin{proposition}\label{p1}
Fix any words $q,p\in F_{\lambda}\setminus\{\eps\}$ and elements $((a,b),(c,d))\in D_p$, $((x,y),(u,v))\in D_q$. If ${\bf z}=((a,b),(c,d))\cdot((x,y),(u,v))\neq 0$, then there exist words $s_1,r_1,s_2,r_2\in F_{\lambda}$ such that $s_1p r_1=s_2q r_2$ and ${\bf z}\in D_{s_1p r_1}=D_{s_2q r_2}$.
\end{proposition}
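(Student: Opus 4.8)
The plan is to reduce everything to the coordinatewise structure of $\mathcal{S}_\lambda$ and then read off the $\mathscr D$-class word of the product from the remark following Lemma~\ref{l1}. Recall that multiplication in $\mathcal{S}_\lambda=(\mathcal{P}_\lambda{\times}\mathcal{P}_\lambda)/I$ is computed coordinatewise in $\mathcal{P}_\lambda{\times}\mathcal{P}_\lambda$, with the result collapsing to $0$ as soon as one of the two coordinates equals $0$. Hence the hypothesis ${\bf z}\neq 0$ forces both polycyclic products $(a,b)\cdot(x,y)$ and $(c,d)\cdot(u,v)$ to be nonzero, and I would write ${\bf z}=((A,B),(C,D))$ with $(A,B)=(a,b)\cdot(x,y)$ and $(C,D)=(c,d)\cdot(u,v)$. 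Since ${\bf z}$ is a nonzero element of the subsemigroup $\mathcal{M}_\lambda$, by the remark after Lemma~\ref{l1} it lies in the class $D_{AC^t}$; so the whole problem is to compute $AC^t$ and exhibit it simultaneously in the forms $s_1 p r_1$ and $s_2 q r_2$.

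First I would invoke the polycyclic multiplication rule in each coordinate. The product $(a,b)\cdot(x,y)$ is nonzero precisely when one of $b,x$ is a prefix of the other, which splits into the subcase $x=x_1 b$ (giving $(A,B)=(x_1 a,y)$) and the subcase $b=b_1 x$ (giving $(A,B)=(a,b_1 y)$); similarly $(c,d)\cdot(u,v)$ splits according to whether $u=u_1 d$ (giving $(C,D)=(u_1 c,v)$) or $d=d_1 u$ (giving $(C,D)=(c,d_1 v)$). This produces four combined cases, in each of which $A$ and $C$ are explicit products involving one ``leftover'' word per coordinate.

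The heart of the argument is a direct computation of $AC^t$ in each case, using the defining identities $ac^t=bd^t=p$ and $xu^t=yv^t=q$ together with the rule $(\alpha\beta)^t=\beta^t\alpha^t$. For instance, in the case $x=x_1 b$ and $d=d_1 u$ one gets $A=x_1 a$ and $C=c$, hence $AC^t=x_1 ac^t=x_1 p$; on the other hand $x_1 p=x_1 b u^t d_1^t=q d_1^t$, so the single word $AC^t$ equals both $x_1 p$ and $q d_1^t$, and one takes $s_1=x_1$, $r_1=\eps$, $s_2=\eps$, $r_2=d_1^t$. The three remaining cases are handled identically: the two ``same-side'' cases $x=x_1 b,\,u=u_1 d$ and $b=b_1 x,\,d=d_1 u$ yield $AC^t=x_1 p u_1^t=q$ and $AC^t=p=b_1 q d_1^t$ respectively, while the other mixed case $b=b_1 x,\,u=u_1 d$ yields $AC^t=p u_1^t=b_1 q$. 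In every case the computed word is displayed as $s_1 p r_1=s_2 q r_2$, and since this word equals $AC^t$ we conclude ${\bf z}\in D_{s_1 p r_1}=D_{s_2 q r_2}$.

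The only real obstacle is the bookkeeping: one must track the transposes carefully and, in each subcase, correctly identify which leftover word enters as a left factor (a prefix of the $\mathscr D$-word) and which enters transposed as a right factor. Nothing beyond the polycyclic cancellation rule and the identities $ac^t=bd^t=p$, $xu^t=yv^t=q$ is needed; in particular the fact that $AC^t=BD^t$, so that the $\mathscr D$-word of ${\bf z}$ is well defined, is automatic because $\mathcal{M}_\lambda$ is a subsemigroup and ${\bf z}\neq 0$.
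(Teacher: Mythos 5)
Your proof is correct and takes essentially the same approach as the paper's: the same four-case split coming from the polycyclic cancellation rule, the same explicit computation of the product ${\bf z}$ in each case, and the same witnesses $s_1,r_1,s_2,r_2$ (the paper's $w,z$ are your $x_1$ or $b_1$ and $u_1$ or $d_1$). The only differences are cosmetic: you verify $s_1pr_1=s_2qr_2$ by substituting $p=bd^t$ and $q=xu^t$ directly, whereas the paper reads the same identity off from the membership ${\bf z}\in\mathcal{M}_{\lambda}$, and your word ``prefix'' should be ``suffix'' in the paper's terminology (your formulas $x=x_1b$, $u=u_1d$, etc.\ are nevertheless exactly the right ones).
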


\begin{proof}
Recall that $p=ac^t=bd^t$ and $q=xu^t=yv^t$. Since ${\bf z}\neq 0$ one of the following cases holds.
\begin{enumerate}
\item $x=wb$ and $u=zd$ for some $w,z\in F_{\lambda}$;
\item $x=wb$ and $d=zu$ for some $w,z\in F_{\lambda}$;
\item $b=wx$ and $d=zu$ for some $w,z\in F_{\lambda}$;
\item $b=wx$ and $u=zd$ for some $w,z\in F_{\lambda}$.
\end{enumerate}

1) Observe that
$${\bf z}=((a,b),(c,d))\cdot((wb,y),(zd,v))=((wa,y),(zc,v)).$$
It follows that $wac^tz^t=yv^t$ which implies that $wpz^t=q$. Hence ${\bf z}\in D_q=D_{wpz^t}$.
Put $s_1=w, r_1=z^t, s_2=r_2=\eps$.

2) Observe that
 $${\bf z}=((a,b),(c,zu))\cdot((wb,y),(u,v))=((wa,y),(c,zv)).$$
 It follows that $wac^t=yv^tz^t$ which implies that  $wp=qz^t$. Hence ${\bf z}\in D_{wp}=D_{qz^t}$.
Put $s_1=w, r_1=s_2=\eps, r_2=z^t$.

3) Observe that
$${\bf z}=((a,wx),(c,zu))\cdot((x,y),(u,v))=((a,wy),(c,zv)).$$
It follows that $ac^t=wyv^tz^t$ which implies that  $p=wqz^t$. Hence ${\bf z}\in D_{p}=D_{wqz^t}$.
Put $s_1=r_1=\eps, s_2=w, r_2=z^t$.

4)Observe that $${\bf z}=((a,wx),(c,d))\cdot((x,y),(zd,v))=((a,wy),(zc,v)).$$
It follows that $ac^tz^t=wyv^t$ which implies that $pz^t=wq$. Hence ${\bf z}\in D_{pz^t}=D_{wq}$.
Put $s_1=r_2=\eps, r_1=z^t, s_2=w$.
\end{proof}


\begin{lemma}\label{l3}
For any $a,b\in\mathcal{M}_{\lambda}\setminus\{0\}$ the set $\{x: ax=b$, or $xa=b\}$ is finite.
\end{lemma}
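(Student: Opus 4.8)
The plan is to reduce the statement to the corresponding finiteness for the polycyclic monoid $\mathcal{P}_\lambda$, where it becomes a transparent word-combinatorics fact, and then to lift it using the fact that multiplication in $\mathcal{M}_\lambda\subset\mathcal{S}_\lambda=(\mathcal{P}_\lambda{\times}\mathcal{P}_\lambda)/I$ is coordinatewise. Concretely, I would write $a=((a_1,b_1),(c_1,d_1))$ and $b=((a_2,b_2),(c_2,d_2))$, and consider a putative solution $x=((e,f),(g,h))$. Since $b\neq 0$, the equation $ax=b$ can hold only if neither coordinate of the product $((a_1,b_1)(e,f),(c_1,d_1)(g,h))$ equals $0$, so it splits into the two independent polycyclic equations $(a_1,b_1)(e,f)=(a_2,b_2)$ and $(c_1,d_1)(g,h)=(c_2,d_2)$ in $\mathcal{P}_\lambda$. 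Hence the solution set of $ax=b$ injects into the product of the solution sets of these two equations, and it suffices to bound, for fixed nonzero $(p,q),(r,s)\in\mathcal{P}_\lambda$, the number of $y$ with $(p,q)y=(r,s)$ and, symmetrically, with $y(p,q)=(r,s)$.

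The core step is the case analysis dictated by the definition of the polycyclic product. Writing $y=(e,f)$, if $e=e_1q$ then $(p,q)(e,f)=(e_1p,f)$, so matching $(r,s)$ forces $f=s$ and $e_1p=r$; the latter determines $e_1$ (the prefix of $r$ left after deleting the suffix $p$, when $p$ is a suffix of $r$), hence determines $e$, giving at most one solution. If instead $q=q_1e$ then $(p,q)(e,f)=(p,q_1f)$, forcing $p=r$ and $q_1f=s$; here $q_1$ ranges over the at most $|q|+1$ prefixes of $q$, each determining $e$ and, whenever $q_1$ is also a prefix of $s$, determining $f$. Thus $(p,q)y=(r,s)$ has at most $|q|+2$ solutions. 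The equation $y(p,q)=(r,s)$ is handled by the same argument with the two multiplication branches interchanged (producing $(p_1e,q)$ or $(e,f_1q)$), again yielding finitely many solutions.

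Combining the two coordinates, the set of $x$ with $ax=b$ is finite, and likewise the set with $xa=b$; the set in the statement is their union. The membership constraint $eg^t=fh^t\neq\varepsilon$ that $x\in\mathcal{M}_\lambda$ imposes only further restricts these sets, so it may be ignored for finiteness. I expect the main obstacle to be purely bookkeeping in the polycyclic case analysis: keeping straight which of $e,f,q,s$ is a prefix or suffix of which in each branch, and making sure both the left equation $ax=b$ and the right equation $xa=b$ are covered. The reduction to coordinates is routine once one notes that, because $b\neq 0$, passing to the Rees quotient by $I$ causes no loss.
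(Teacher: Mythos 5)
Your proposal is correct and follows essentially the same route as the paper: both reduce the equations $ax=b$ and $xa=b$ in $\mathcal{M}_\lambda\subset\mathcal{S}_\lambda$ to pairs of independent equations in $\mathcal{P}_\lambda$ (using that $b\neq 0$ forces both coordinates of the product to be non-zero, so the Rees quotient causes no loss), and then conclude from finiteness of the solution sets of $(p,q)\cdot y=(r,s)$ and $y\cdot(p,q)=(r,s)$ in the polycyclic monoid. The only difference is that the paper cites this polycyclic finiteness as Lemma~1 of Mesyan, Mitchell, Morayne and P\'{e}resse, whereas you prove it inline by the case analysis on the two branches of the polycyclic product; that analysis (at most one solution in the branch $e=e_1q$, at most $|q|+1$ in the branch $q=q_1e$, and symmetrically for right multiplication) is correct.
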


\begin{proof}
Recall that $\mathcal{S}_{\lambda}$ is the Rees quotient semigroup $(\mathcal{P}_{\lambda}{\times}\mathcal{P}_{\lambda})/J$, where $J=\{0\}{\times}\mathcal{P}_{\lambda}\cup \mathcal{P}_{\lambda}{\times}\{0\}$. The point $q(J)\in \mathcal{S}_{\lambda}$, where $q$ is the quotient map is naturally denoted by $0$.
Fix any non-zero elements $a,b\in \mathcal{S}_{\lambda}$. The definition of $\mathcal{S}_{\lambda}$ ensures that $a=((c,d),(e,f))$, $b=((p,q),(r,s))$ and $\{(a,b),(c,d),(p,q),(r,s)\}\subset \mathcal{P}_{\lambda}\setminus\{0\}$. By Lemma~1 from~\cite{Mesyan-Mitchell-Morayne-Peresse-2013}, the sets $$A=\{x\in \mathcal{P}_{\lambda}: (c,d)\cdot x=(p,q),\hbox{ or } x\cdot(c,d)=(p,q)\}\hbox{ and }$$
$$B=\{x\in \mathcal{P}_{\lambda}: (e,f)\cdot x=(r,s),\hbox{ or } x\cdot(e,f)=(r,s)\}$$ are finite. Then the set
$$\{x\in \mathcal{S}_{\lambda}: ax=b,\hbox{ or }xa=b\}\subset A{\times}B$$ is finite. Recall that the semigroup $\mathcal{M}_{\lambda}$ is a subsemigroup of $\mathcal{S}_{\lambda}$. It follows that for any $a,b\in \mathcal{M}_{\lambda}\setminus\{0\}$ the set
$\{x\in\mathcal{M}_{\lambda}: ax=b$, or $xa=b\}$ is contained in the finite set $\{x\in\mathcal{S}_{\lambda}: ax=b,\hbox{ or }xa=b\}$, witnessing that it is finite as well.
\end{proof}

Groups of automorphism of free inverse semigroups and related structures were investigated by Schein and coauthors in~\cite{MS,MSZ,PS}. Since McAlister semigroups generalize the free inverse semigroup over a singleton, the problem of describing the group $\mathrm{Aut}(\mathcal{M}_{\lambda})$ of automorphisms of the McAlister semigroup $\mathcal{M}_{\lambda}$ arises naturally.

For this we are going to introduce two kinds of automorphisms of $\mathcal{M}_{\lambda}$. Assume that $F_{\lambda}$ is the free monoid over the set $\{a_{\alpha}:\alpha\in\lambda\}$.
Let $f:\lambda\rightarrow \lambda$ be a bijection. Note that $f$ generates the automorphism $h$ of $F_{\lambda}$ defined by $h(a_{\alpha_1}\ldots a_{\alpha_n})=a_{f(\alpha_1)}\ldots a_{f(\alpha_n)}$ and $h(\eps)=\eps$. The automorphism $h$ generates two maps $\phi_f,\psi_f:\mathcal{M}_{\lambda}\rightarrow \mathcal{M}_{\lambda}$ defined by the formulae:
$$\phi_f[((a,b),(c,d))]=((h(a),h(b)),(h(c),h(d))), \quad\hbox{ and }\quad\phi_f(0)=0;$$
$$\psi_f[((a,b),(c,d))]=((h(c),h(d)),(h(a),h(b))), \quad\hbox{ and }\quad\psi_f(0)=0.$$

Let $g$ be an automorphism of $F_{\lambda}$ and fix any $a=a_{\alpha_1}\ldots a_{\alpha_n}\in F_{\lambda}$. Then the following holds:
$$g(a)^t=[g(a_{\alpha_1})\ldots g(a_{\alpha_n})]^t=g(a_{\alpha_n})\ldots g(a_{\alpha_1})=g(a^t).$$
Therefore, if $ac^t=bd^t$, then $$h(a)h(c)^t=h(a)h(c^t)=h(ac^t)=h(bd^t)=h(b)h(d^t)=h(b)h(d)^t.$$ It follows that $$h(c)h(a)^t=[h(a)h(c)^t]^t=[h(b)h(d)^t]^t=h(d)h(b)^t.$$
Hence the maps $\phi_f$ and $\psi_f$ are defined correctly.

Words $a,b\in F_{\lambda}$ are called {\em suffix-incomparable} if neither $a=cb$ nor $b=ca$ for any $c\in F_{\lambda}$.
Note that for any non-zero elements $(a,b)$ and $(c,d)$ of the polycyclic monoid $\mathcal{P}_{\lambda}$, $(a,b)\cdot(c,d)=0$ iff the words $b$ and $c$ are suffix-incomparable.

The next theorem describes the set $\mathrm{Aut}(\mathcal{M}_{\lambda})$ of automorphisms of the semigroup $\mathcal{M}_{\lambda}$.

\begin{proposition}\label{pr}
$\mathrm{Aut}(\mathcal{M}_{\lambda})=\{\phi_f:f$ is a bijection of $\lambda\}\cup \{\psi_f:f$ is a bijection of $\lambda\}$.
\end{proposition}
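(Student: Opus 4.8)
\section*{Proof proposal}

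The plan is to establish both inclusions, of which $\supseteq$ is the routine part. Observe that $\psi_f=\sigma\circ\phi_f$, where $\sigma\colon((a,b),(c,d))\mapsto((c,d),(a,b))$ swaps the two polycyclic coordinates. Since the operation of $\mathcal S_\lambda$ is computed coordinatewise, $\sigma$ is an involutive automorphism of $\mathcal S_\lambda$, and by the transpose-symmetry $ac^t=bd^t\Leftrightarrow ca^t=db^t$ it restricts to $\mathcal M_\lambda$. The map $\phi_f$ is a homomorphism because the word automorphism $h$ preserves lengths, concatenation and the prefix/suffix relations that govern the semigroup operation; it is bijective with inverse $\phi_{f^{-1}}$. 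As the excerpt already shows $\phi_f,\psi_f$ to be well defined, this gives $\phi_f,\psi_f\in\mathrm{Aut}(\mathcal M_\lambda)$, hence $\supseteq$.

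For the inclusion $\subseteq$ fix $\Phi\in\mathrm{Aut}(\mathcal M_\lambda)$. Being an automorphism, $\Phi$ fixes $0$, preserves inverses, idempotents, the relations $\mathscr L,\mathscr R,\mathscr H$ and, by Lemma~\ref{l1}, the coinciding relations $\mathscr D=\mathscr J$ together with the $\mathscr J$-order. First I would single out the $\mathscr J$-maximal classes: using Proposition~\ref{p1} one checks that every $D_p$ with $|p|\ge 2$ lies strictly below $D_a$ for any length-one factor $a$ of $p$, so the $\mathscr J$-maximal classes are precisely the $D_{a_\alpha}$ with $|a_\alpha|=1$, each of cardinality $4$ by Lemma~\ref{l2}. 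Write $e_\alpha=((a_\alpha,a_\alpha),(\eps,\eps))$ and $f_\alpha=((\eps,\eps),(a_\alpha,a_\alpha))$ for the two idempotents of $D_{a_\alpha}$ and $g_\alpha=((a_\alpha,\eps),(\eps,a_\alpha))$, $g_\alpha^{-1}$ for its two non-idempotents; then the maximal non-zero idempotents of $\mathcal M_\lambda$ are exactly $\{e_\alpha,f_\alpha:\alpha\in\lambda\}$. Since $\Phi$ permutes the classes $D_{a_\alpha}$ it induces a permutation $\sigma\in Sym(\lambda)$, and because $e_\alpha\mathscr D f_\alpha$ while $e_\alpha$ and $f_\beta$ lie in distinct classes for $\alpha\neq\beta$, we get $\{\Phi(e_\alpha),\Phi(f_\alpha)\}=\{e_{\sigma(\alpha)},f_{\sigma(\alpha)}\}$ for every $\alpha$.

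The crucial step is to show that the binary choice ``$\Phi(e_\alpha)=e_{\sigma(\alpha)}$ or $\Phi(e_\alpha)=f_{\sigma(\alpha)}$'' is made uniformly in $\alpha$, and here I would exploit the orthogonality pattern of maximal idempotents: a direct computation gives $e_\alpha e_\beta=f_\alpha f_\beta=0$ for $\alpha\neq\beta$, whereas $e_\alpha f_\beta\neq0$ for all $\alpha,\beta$. If $\Phi$ were ``straight'' at $\alpha$ and ``swapped'' at $\beta$ (with $\alpha\neq\beta$), then $\Phi(e_\alpha)\Phi(e_\beta)=e_{\sigma(\alpha)}f_{\sigma(\beta)}\neq0=\Phi(e_\alpha e_\beta)$, a contradiction. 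Hence, setting $f:=\sigma$, either $\Phi(e_\alpha)=e_{f(\alpha)}$ and $\Phi(f_\alpha)=f_{f(\alpha)}$ for all $\alpha$ (agreeing with $\phi_f$ on idempotents), or $\Phi(e_\alpha)=f_{f(\alpha)}$ and $\Phi(f_\alpha)=e_{f(\alpha)}$ for all $\alpha$ (agreeing with $\psi_f$).

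It remains to pin down $\Phi$ on the non-idempotents and extend to all of $\mathcal M_\lambda$. Since $g_\alpha g_\alpha^{-1}=e_\alpha$, $g_\alpha^{-1}g_\alpha=f_\alpha$, and $\Phi$ preserves products and inverses, comparing $\Phi(g_\alpha)\Phi(g_\alpha)^{-1}=\Phi(e_\alpha)$ with the two candidates $g_{f(\alpha)},g_{f(\alpha)}^{-1}$ forces $\Phi(g_\alpha)=g_{f(\alpha)}=\phi_f(g_\alpha)$ in the first alternative and $\Phi(g_\alpha)=g_{f(\alpha)}^{-1}=\psi_f(g_\alpha)$ in the second. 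Finally I would prove that $\{g_\alpha,g_\alpha^{-1}:\alpha\in\lambda\}$ generates $\mathcal M_\lambda\setminus\{0\}$; combined with $\Phi(0)=0$ this yields $\Phi=\phi_f$ or $\Phi=\psi_f$, since two homomorphisms coinciding on a generating set coincide. I expect this generation statement to be the main obstacle: it should be proved by induction on $|p|$, using Lemma~\ref{l0} and the case analysis of Proposition~\ref{p1} to factor each element of $D_p$ as a product of a top-class generator and an element of a shorter $\mathscr D$-class, and the book-keeping over the several prefix/suffix cases is the genuinely technical part of the argument.
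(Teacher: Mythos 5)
Your proposal is correct in substance and shares the paper's overall skeleton: prove that $\phi_f,\psi_f$ are automorphisms, locate the length-one layer of $\mathcal M_\lambda$ by automorphism-invariant data, establish a uniform ``straight or swapped'' dichotomy on that layer, and finish by agreement on a generating set. But the dichotomy step is executed genuinely differently. The paper works with the non-idempotent elements $G_1\cup G_2$ of the length-one classes: it shows $\phi(G_1\cup G_2)=G_1\cup G_2$ via the maximal idempotents of $E(\mathcal M_\lambda)$, and then derives the dichotomy (its Claim 3) from the zero/non-zero pattern of products of these generators together with preservation of inversion, in a two-case argument. You instead read the dichotomy off the multiplication table of the maximal idempotents themselves: $e_\alpha e_\beta=f_\alpha f_\beta=0$ for $\alpha\neq\beta$ while $e_\alpha f_\beta=((a_\alpha,a_\alpha),(a_\beta,a_\beta))\neq 0$, so a mixed choice contradicts $\Phi(0)=0$; then $g_\alpha g_\alpha^{-1}=e_\alpha$ and $g_\alpha^{-1}g_\alpha=f_\alpha$ pin down $\Phi$ on the non-idempotents. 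I verified these products; your argument is correct and is shorter and cleaner than the paper's Claim 3 (it is vacuous when $\lambda=1$, where it is also not needed).

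Two caveats. First, you attribute the identification of the $\mathscr J$-maximal classes to Proposition~\ref{p1}, but that proposition only gives a necessary condition on products (the index of a non-zero product contains the index of each factor as a subword), which proves maximality of the length-one classes; to see that $D_p$ with $|p|=n\ge 2$ is \emph{not} maximal you need a witness factorization, e.g. $z\cdot y=z$ for $z=((p,p_1),(\eps,p_n\cdots p_2))\in D_p$ and the idempotent $y=((p_1,p_1),(\eps,\eps))\in D_{p_1}$; alternatively, bypass the $\mathscr J$-order and use maximal idempotents of the semilattice $E(\mathcal M_\lambda)$, as the paper does. Second, and more importantly, the generation statement you defer is exactly the paper's Claim~\ref{claim}, and in the paper that is where the real work lies: each non-zero element is written explicitly as a product of three elements of the forms $((\eps,u),(u^t,\eps))$ and $((u,\eps),(\eps,u^t))$, which are visibly products of your $g_\alpha^{\pm 1}$. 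Your proposed induction on $|p|$ does go through --- one corner of the egg-box of $D_p$, namely $((\eps,p_1),(p^t,p_n\cdots p_2))$, equals $g_{p_1}^{-1}$ times an element of $D_{p_2\cdots p_n}$, and left/right multiplication by suitable $g^{\pm 1}$'s moves one through the whole egg-box --- but as written this is only a sketch, so your proof is complete modulo this lemma, which you would either have to carry out or quote from the paper.
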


\begin{proof}
Let us check that for a fixed bijection $f$ the map $\phi_f$ is an automorphism. The injectivity of $\phi_f$ follows from the injectivity of $h$. To prove that the map $\phi_f$ is surjective, fix any non-zero point $((a,b),(c,d))\in\mathcal{M}_{\lambda}$. Clearly, $\phi_f[((h^{-1}(a),h^{-1}(b)),(h^{-1}(c),h^{-1}(d)))]=((a,b),(c,d))$. It remains to check that $$((h^{-1}(a),h^{-1}(b)),(h^{-1}(c),h^{-1}(d)))\in\mathcal{M}_{\lambda}.$$ Since $h^{-1}$ is an automorphism of $F_{\lambda}$ the arguments above imply that $h^{-1}(p)^t=h^{-1}(p^t)$ for any $p\in F_{\lambda}$.
It follows that
$$h^{-1}(a)h^{-1}(c)^t=h^{-1}(a)h^{-1}(c^t)=h^{-1}(ac^t)=h^{-1}(bd^t)=h^{-1}(b)h^{-1}(d^t)=h^{-1}(b)h^{-1}(d)^t,$$
witnessing that $((h^{-1}(a),h^{-1}(b)),(h^{-1}(c),h^{-1}(d)))\in\mathcal{M}_{\lambda}$. Hence the map $\phi_f$ is a bijection.

Let us check that $\phi_f$ is a homomorphism.
Fix any elements $((a,b),(c,d))$ and $((x,y),(u,v))$ of $\mathcal{M}_{\lambda}$ and put ${\bf z}=((a,b),(c,d))\cdot((x,y),(u,v))$. Two cases are possible:
\begin{enumerate}
\item ${\bf z}=0$;
\item ${\bf z}\neq 0$.
\end{enumerate}

In case 1 we have that the words $b$ and $x$ are suffix-incomparable, or the words $d$ and $u$ are suffix-incomparable. It is straightforward to check that suffix-incomparability is preserved by automorphisms of $F_{\lambda}$. Thus, the words $h(b)$ and $h(x)$ are suffix-incomparable, or the words $h(d)$ and $h(u)$ are suffix-incomparable. Hence
$$\phi_f[((a,b),(c,d))]\cdot\phi_f[((x,y),(u,v))]=((h(a),h(b)),(h(c),h(d)))\cdot((h(x),h(y)),(h(u),h(v)))=0=\phi_f({\bf z}).$$

Second case has four subcases:
\begin{itemize}
\item[(2.1)] $x=wb$ and $u=zd$ for some $w,z\in F_{\lambda}$;
\item[(2.2)] $x=wb$ and $d=zu$ for some $w,z\in F_{\lambda}$;
\item[(2.3)] $b=wx$ and $d=zu$ for some $w,z\in F_{\lambda}$;
\item[(2.4)] $b=wx$ and $u=zd$ for some $w,z\in F_{\lambda}$.
\end{itemize}

2.1) Observe that
$${\bf z}=((a,b),(c,d))\cdot((wb,y),(zd,v))=((wa,y),(zc,v)).$$
Then
\begin{equation*}
\begin{split}
\phi_f[((a,b),(c,d))]\cdot\phi_f[((x,y),(u,v))]=((h(a),h(b)),(h(c),h(d)))\cdot((h(wb),h(y)),(h(zd),h(v)))=\\
((h(a),h(b)),(h(c),h(d)))\cdot((h(w)h(b),h(y)),(h(z)h(d),h(v)))=\\
((h(wa),h(y)),(h(zc),h(v)))=\phi_f({\bf z}).
\end{split}
\end{equation*}

2.2) Observe that
$${\bf z}=((a,b),(c,zu))\cdot((wb,y),(u,v))=((wa,y),(c,zv)).$$
Then
\begin{equation*}
\begin{split}
\phi_f[((a,b),(c,d))]\cdot\phi_f[((x,y),(u,v))]=((h(a),h(b)),(h(c),h(zu)))\cdot((h(wb),h(y)),(h(u),h(v)))=\\
=((h(a),h(b)),(h(c),h(z)h(u)))\cdot((h(w)h(b),h(y)),(h(u),h(v)))=\\
((h(wa),h(y)),(h(c),h(zv)))=\phi_f({\bf z}).
\end{split}
\end{equation*}

2.3) Observe that
$${\bf z}=((a,wx),(c,zu))\cdot((x,y),(u,v))=((a,wy),(c,zv)).$$
Then
\begin{equation*}
\begin{split}
\phi_f[((a,b),(c,d))]\cdot\phi_f[((x,y),(u,v))]=((h(a),h(wx)),(h(c),h(zu)))\cdot((h(x),h(y)),(h(u),h(v)))=\\
=((h(a),h(w)h(x)),(h(c),h(z)h(u)))\cdot((h(x),h(y)),(h(u),h(v)))=\\
((h(a),h(wy)),(h(c),h(zv)))=\phi_f({\bf z}).
\end{split}
\end{equation*}

2.4) Observe that
$${\bf z}=((a,wx),(c,d))\cdot((x,y),(zd,v))=((a,wy),(zc,v)).$$
Then
\begin{equation*}
\begin{split}
\phi_f[((a,b),(c,d))]\cdot\phi_f[((x,y),(u,v))]=((h(a),h(wx)),(h(c),h(d)))\cdot((h(x),h(y)),(h(zd),h(v)))=\\
=((h(a),h(w)h(x)),(h(c),h(d)))\cdot((h(x),h(y)),(h(z)h(d),h(v)))=\\
((h(a),h(wy)),(h(zc),h(v)))=\phi_f({\bf z}).
\end{split}
\end{equation*}
Hence for each bijection $f:\lambda\rightarrow \lambda$ the map $\phi_f$ is an automorphism of $\mathcal{M}_{\lambda}$.

Note that for each semigroup $S$ the map $g:S^2\rightarrow S^2$, $g[(x,y)]=(y,x)$ is an automorphism. Moreover, if $J$ is a two sided ideal of $S^2$ such that $(a,b)\in J$ iff $(b,a)\in J$, then $g$ generates an automorphism $g^*$ of the Rees quotient semigroup $S^2/J$ defined by $g^*(a,b)=g(a,b)$ if $(a,b)\notin J$ and $g^*(0)=0$, where by $0$ we denote the image of $J$ under the quotient map. If $X$ is a subsemigroup of $S^2/J$ such that $(a,b)\in X$ iff $(b,a)\in X$, then the restriction $g^*|_X$ is an automorphism of $X$. Applying the above arguments to the polycyclic monoid $\mathcal{P}_{\lambda}$ we get that the map $\psi_{id}$ is an automorphism of the semigroup $\mathcal{M}_{\lambda}$, where $id:\lambda\rightarrow \lambda$ is the identity map. Observe that $\psi_f=\phi_f\circ \psi_{id}=\psi_{id}\circ\phi_f$ for any bijection $f$ on $\lambda$, witnessing that $\psi_f$ is an automorphism of $\mathcal{M}_{\lambda}$. Hence
$$\{\phi_f:f \hbox{ is a bijection of }\lambda\}\cup \{\psi_f:f \hbox{ is a bijection of }\lambda\}\subset \mathrm{Aut}(\mathcal{M}_{\lambda}).$$

To show the converse inclusion fix any automorphism $\phi$ of the McAlister semigroup $\mathcal{M}_{\lambda}$. Let
$$G_1=\{((\eps,a_{\alpha}),(a_{\alpha},\eps)):\alpha\in\lambda\}\quad\hbox{ and }\quad G_2=\{((a_{\alpha},\eps),(\eps,a_{\alpha})):\alpha\in\lambda\}.$$
\begin{claim}\label{claim}
The set $G_1\cup G_2\cup\{0\}$ generates $\mathcal{M}_{\lambda}$
\end{claim}
\begin{proof}
By $\langle G_1\cup G_2\cup\{0\}\rangle$ we denote the subsemigroup of $\mathcal{M}_{\lambda}$ which is generated by $G_1\cup G_2\cup\{0\}$. Observe that for each word $u=u_1\ldots u_n\in F_{\lambda}\setminus\{\eps\}$,
$$((\eps,u),(u^t,\eps))=((\eps,u_1),(u_1,\eps))\cdot((\eps,u_2),(u_2,\eps))\cdots ((\eps,u_n),(u_n,\eps))\quad \hbox{and}$$
$$((u,\eps),(\eps,u^t))=((u_n,\eps),(\eps,u_n))\cdot((u_{n-1},\eps),(\eps,u_{n-1}))\cdots ((u_1,\eps),(\eps,u_1)).$$
Hence $\{((\eps,u),(u^t,\eps)),((u,\eps),(\eps,u^t))\}\subset \langle G_1\cup G_2\cup\{0\}\rangle$ for each $u\in F_{\lambda}\setminus\{\eps\}$.
Fix any non-zero element $g\in\mathcal{M}_{\lambda}$. By Lemma~\ref{l0}, we can represent $g$ as $((a,aw),(bw^t,b))$ or $((aw,a),(b,bw^t))$ for some $a,b,w\in F_{\lambda}$.
Assume that $g=((a,aw),(bw^t,b))$. Let $x=((\eps,wb^t),(bw^t,\eps))$, $y=((awb^t,\eps),(\eps,bw^ta^t))$ and $z=((\eps,aw),(w^ta^t,\eps))$. By the preceding arguments, $\{x,y,z\}\subset \langle G_1\cup G_2\cup\{0\}\rangle$.  It remains to calculate that
\begin{equation*}
\begin{split}
xyz=((\eps,wb^t),(bw^t,\eps))\cdot((awb^t,\eps),(\eps,bw^ta^t))\cdot((\eps,aw),(w^ta^t,\eps))=\\
((a,\eps),(bw^t,bw^ta^t))\cdot((\eps,aw),(w^ta^t,\eps))=((a,aw),(bw^t,b))=g.
\end{split}
\end{equation*}
If $g=((aw,a),(b,bw^t))$, then let $x=((aw,\eps),(\eps, w^ta^t))$, $y=((\eps, awb^t),(bw^ta^t,\eps))$ and $z=((wb^t,\eps),(\eps,bw^t))$. Clearly, $\{x,y,z\}\subset \langle G_1\cup G_2\cup\{0\}\rangle$.  It remains to calculate that
\begin{equation*}
\begin{split}
xyz=((aw,\eps),(\eps, w^ta^t))\cdot((\eps, awb^t),(bw^ta^t,\eps))\cdot((wb^t,\eps),(\eps,bw^t))=\\
((aw,awb^t),(b,\eps))\cdot((wb^t,\eps),(\eps,bw^t))=((aw,a),(b,bw^t))=g.
\end{split}
\end{equation*}
Thus $g\in \langle G_1\cup G_2\cup\{0\}\rangle$. Hence $\mathcal{M}_{\lambda}=\langle G_1\cup G_2\cup\{0\}\rangle$.
\end{proof}
Let us note that in the previous claim we add $0$ to the set of generators of $\mathcal{M}_{\lambda}$ only to establish the case $\lambda=1$. If $\lambda>1$ this is not necessary.
\begin{claim}
 $\phi(G_1\cup G_2)=G_1\cup G_2$.
\end{claim}
\begin{proof}
Let $M$ be the set of all maximal elements of the semilattice $E(\mathcal{M}_{\lambda})$. It is easy to see that $e\in M$ iff $e=((\eps,\eps),(a_{\alpha},a_{\alpha}))$ or $e=((a_{\alpha},a_{\alpha}),(\eps,\eps))$ for some $\alpha\in \lambda$. Since the restriction of $\phi$ on the semilattice $E(\mathcal{M}_{\lambda})$ is an automorphism of $E(\mathcal{M}_{\lambda})$ we deduce that $\phi|_M$ is a bijection of $M$.

First we shall show that $\phi (G_1)\subset G_1\cup G_2$. Let us note that
$$G_1\cup G_2=\{((a,b),(c,d))\in\mathcal{M}_{\lambda}:|ac^t|=1\}\setminus M.$$ Fix any $g=((\eps,a_{\alpha}),(a_{\alpha},\eps))\in G_1$ and let $((a,b),(c,d))=\phi(g)$.  To derive a contradiction, assume that $|ac^t|=|bd^t|>1$. Then,
$$\phi[((\eps,\eps),(a_{\alpha},a_{\alpha}))]=\phi(gg^{-1})=\phi(g)\cdot \phi(g)^{-1}=((a,b),(c,d))\cdot((b,a),(d,c))=((a,a),(c,c))\notin M.$$
But this contradicts to the fact $\phi(M)=M$. The obtained contradiction implies that $|ac^t|=|bd^t|=1$. Since $\phi$ is an automorphism and $g\notin E(\mathcal{M}_{\lambda})$ we deduce that $\phi(g)\notin E(\mathcal{M}_{\lambda})$. Consequently, $\phi(g)\in G_1\cup G_2$, witnessing that $\phi(G_1)\subset G_1\cup G_2$. Similarly, it can be showed that $\phi(G_2)\subset G_1\cup G_2$. It follows that $\phi(G_1\cup G_2)\subset G_1\cup G_2$.
Since $\phi^{-1}$ is an automorphism of $\mathcal{M}_{\lambda}$, the same arguments imply that $\phi^{-1}(G_1\cup G_2)\subset G_1\cup G_2$. Then $G_1\cup G_2=\phi(\phi^{-1}(G_1\cup G_2))\subset \phi(G_1\cup G_2)$. Hence $\phi(G_1\cup G_2)=G_1\cup G_2$.
\end{proof}
\begin{claim}
$\phi(G_1)=G_1$ and $\phi(G_2)=G_2$, or $\phi(G_1)=G_2$ and $\phi(G_2)=G_1$.
\end{claim}
\begin{proof}
Two cases are possible:
\begin{enumerate}
\item $\phi(G_1)\subset G_1$;
\item $\phi(G_1)\cap G_2\neq \emptyset$.
\end{enumerate}

1. If $\phi(G_1)=G_1$, then $\phi(G_2)=\phi(G_1^{-1})=\phi(G_1)^{-1}=G_1^{-1}=G_2$ and we are done. Assuming that $G_1\setminus \phi(G_1)\neq \emptyset$ and taking into account that $\phi(G_1\cup G_2)=G_1\cup G_2$, we obtain that there exists $s=((a_{\alpha},\eps),(\eps,a_{\alpha}))\in G_2$ such that $\phi(s)=((\eps,a_{\beta}),(a_{\beta},\eps))\in G_1$. Since $\phi(G_1\cup G_2)=G_1\cup G_2$, there exists $t=((a_{\gamma},\eps),(\eps,a_{\gamma}))\in G_2$ such that $\phi(t)=s$. Note that $\alpha\neq \beta$, or equivalently $s\neq \phi(s)^{-1}$, because $s\in G_2$ and
$$\phi(s)^{-1}=\phi(s^{-1})=\phi[((\eps,a_{\alpha}),(a_{\alpha},\eps))]\in \phi(G_1)\subset G_1.$$
Also note that $st=((a_{\gamma}a_{\alpha},\eps),(\eps,a_{\alpha}a_{\gamma}))\neq 0$ which implies that $\phi(st)\neq 0$ as well. On the other hand
$$\phi(st)=\phi(s)\cdot \phi(t)= ((\eps,a_{\beta}),(a_{\beta},\eps))\cdot((a_{\alpha},\eps),(\eps,a_{\alpha}))=0.$$
The obtained contradiction implies that if case 1 holds, then $\phi(G_1)=G_1$ and $\phi(G_2)=G_2$.

2. If $G_2\subset \phi(G_1)$, then $\phi(G_2)=\phi(G_1^{-1})=\phi(G_1)^{-1}\supset G_2^{-1}=G_1$. Since $\phi$ is a bijection and $\phi(G_1\cup G_2)=G_1\cup G_2$ we get that $\phi(G_1)=G_2$ and $\phi(G_2)=G_1$. Assume that $G_2\setminus \phi(G_1)\neq \emptyset$. Since $\phi(G_1)\cap G_2\neq \emptyset$,
there exists $s=((\eps,a_{\alpha}),(a_{\alpha},\eps))\in G_1$ such that $\phi(s)=((a_{\beta},\eps),(\eps,a_{\beta}))\in G_2$. Since $\phi(G_1\cup G_2)=G_1\cup G_2$ there exists $t=((a_{\gamma},\eps),(\eps,a_{\gamma}))\in G_2$ such that $\phi(t)=((a_{\delta},\eps),(\eps,a_{\delta}))\in G_2\setminus \phi(G_1)$. Note that $\gamma\neq \alpha$, or equivalently $t\neq s^{-1}$, because $\phi(t)\in G_2$ and
$$\phi(s^{-1})=\phi(s)^{-1}=((\eps,a_{\beta}),(a_{\beta},\eps))\in G_1.$$
Then
$$0=\phi(0)=\phi(st)=\phi(s)\cdot\phi(t)=((a_{\beta},\eps),(\eps,a_{\beta}))\cdot((a_{\delta},\eps),(\eps,a_{\delta}))=((a_{\delta}a_{\beta},\eps),(\eps,a_{\beta}a_{\delta}))\neq 0.$$
The obtained contradiction implies that if case 2 holds, then $\phi(G_1)=G_2$ and $\phi(G_2)=G_1$.
\end{proof}

So, we either have $\phi(G_1)=G_1$ and $\phi(G_2)=G_2$, or $\phi(G_1)=G_2$ and $\phi(G_2)=G_1$.
In the first case $\phi$ generates a bijection $f:\lambda\rightarrow \lambda$ defined by:
$f(\alpha)=\beta$ iff $\phi[((\eps,a_{\alpha}),(a_{\alpha},\eps))]=((\eps,a_{\beta}),(a_{\beta},\eps))$. At this point it is straightforward to check that the restriction of $\phi$ on $G_1\cup G_2\cup\{0\}$ coincides with the restriction of $\phi_f$ on $G_1\cup G_2\cup\{0\}$. Recall that by Claim~\ref{claim} the set $G_1\cup G_2\cup\{0\}$ generates $\mathcal{M}_{\lambda}$. Since both $\phi$ and $\phi_f$ are homomorphisms we get that $\phi=\phi_f$.

In the second case  $\phi$ generates a bijection $f:\lambda\rightarrow \lambda$ defined by:
$f(\alpha)=\beta$ iff $\phi[((\eps,a_{\alpha}),(a_{\alpha},\eps))]=((a_{\beta},\eps),(\eps,a_{\beta}))$. At this point it is straightforward to check that the restriction of $\phi$ on $G_1\cup G_2\cup\{0\}$ coincides with the restriction of $\psi_f$ on $G_1\cup G_2\cup\{0\}$, witnessing that $\phi=\psi_f$.

Hence $\mathrm{Aut}(\mathcal{M}_{\lambda})=\{\phi_f:f$ is a bijection of $\lambda\}\cup \{\psi_f:f$ is a bijection of $\lambda\}$.
\end{proof}

By $Sym(\lambda)$ we denote the group of permutations of the cardinal $\lambda$, and by $\mathbb Z_2$ the two-element group $\{1,-1\}$.
Due to  Mashevitzky, Schein and Zhitomirski~\cite{MSZ}, the group of automorphisms of the free inverse semigroup $\mathbf{F}_X$ over a set $X$ of cardinality $\lambda$ is isomorphic to the wreath product of $Sym(\lambda)$ and $\mathbb{Z}_2$. In particular, if the set $X$ is singleton, then the group $\mathrm{Aut}(\mathbf{F}_X)$ is isomorphic to $\mathbb{Z}_2$. The next theorem indicates that McAlister semigroups are related not only to the free inverse semigroup over a singleton, but also to arbitrary free inverse semigroups.

\begin{theorem}\label{t}
The group $\mathrm{Aut}(\mathcal{M}_{\lambda})$ is isomorphic to the direct product $Sym(\lambda){\times}\mathbb{Z}_2$.
\end{theorem}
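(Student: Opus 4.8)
The plan is to exhibit an explicit isomorphism $\Theta\colon Sym(\lambda)\times\mathbb{Z}_2\to\mathrm{Aut}(\mathcal{M}_{\lambda})$, using Proposition~\ref{pr} to pin down the underlying set of automorphisms and then verifying that $\Theta$ respects the two group operations. I would define $\Theta(f,1)=\phi_f$ and $\Theta(f,-1)=\psi_f$ for each bijection $f$ of $\lambda$. By Proposition~\ref{pr} the map $\Theta$ is surjective, so the work splits into proving injectivity and the homomorphism property.

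For injectivity I would first record that the assignment $f\mapsto h_f$ into $\mathrm{Aut}(F_{\lambda})$ is injective and satisfies $h_f\circ h_g=h_{f\circ g}$, which is immediate from the defining formula $h_f(a_{\alpha_1}\cdots a_{\alpha_n})=a_{f(\alpha_1)}\cdots a_{f(\alpha_n)}$. Evaluating $\phi_f$ and $\psi_f$ on a generator $((\eps,a_{\alpha}),(a_{\alpha},\eps))\in G_1$ recovers $f$ from the automorphism, so $f\mapsto\phi_f$ and $f\mapsto\psi_f$ are each injective. To separate the two families, observe that $\phi_f(G_1)=G_1$ while $\psi_g(G_1)=G_2$ by direct substitution; hence no $\phi_f$ coincides with any $\psi_g$, and $\Theta$ is a bijection.

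The homomorphism property reduces to the four composition identities $\phi_f\circ\phi_g=\phi_{f\circ g}$, $\psi_f\circ\psi_g=\phi_{f\circ g}$, $\phi_f\circ\psi_g=\psi_{f\circ g}$ and $\psi_f\circ\phi_g=\psi_{f\circ g}$. Each follows by substituting the definitions of $\phi$ and $\psi$ and applying $h_f\circ h_g=h_{f\circ g}$; the only thing to watch is the coordinate swap built into $\psi$, which is exactly what produces the sign in $\mathbb{Z}_2$. Together these four identities say $\Theta(f,\epsilon)\circ\Theta(g,\epsilon')=\Theta(f\circ g,\epsilon\epsilon')$, which is the multiplication rule of $Sym(\lambda)\times\mathbb{Z}_2$, so $\Theta$ is an isomorphism.

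Conceptually, the cleanest route — and the reason the answer is a direct product rather than the wreath product that appears for free inverse semigroups — is that the swap automorphism $\psi_{id}$ is central. The relation $\psi_f=\phi_f\circ\psi_{id}=\psi_{id}\circ\phi_f$ from Proposition~\ref{pr} shows $\psi_{id}$ commutes with every $\phi_f$, while $\psi_{id}\circ\psi_{id}=\phi_{id}$ is the identity. Thus $\{\phi_f\}\cong Sym(\lambda)$ and $\{\phi_{id},\psi_{id}\}\cong\mathbb{Z}_2$ are commuting subgroups with trivial intersection whose product exhausts $\mathrm{Aut}(\mathcal{M}_{\lambda})$. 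I expect no serious obstacle: the content lies entirely in the bookkeeping of the coordinate swap, and the commutativity already noted in Proposition~\ref{pr} is precisely what upgrades a potential semidirect product to a direct one.
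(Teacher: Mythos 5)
Your proposal is correct and takes essentially the same approach as the paper: both rely on Proposition~\ref{pr} to identify the underlying set of $\mathrm{Aut}(\mathcal{M}_{\lambda})$ and then verify the four composition identities $\phi_f\circ\phi_g=\phi_{f\circ g}$, $\psi_f\circ\psi_g=\phi_{f\circ g}$, $\phi_f\circ\psi_g=\psi_f\circ\phi_g=\psi_{f\circ g}$ to see that the evident bijection with $Sym(\lambda){\times}\mathbb{Z}_2$ is an isomorphism. The only cosmetic differences are that you check these identities pointwise from $h_f\circ h_g=h_{f\circ g}$ (the paper instead checks them on the generating set $G_1\cup G_2\cup\{0\}$ and invokes Claim~\ref{claim}), and you make the injectivity of the correspondence explicit, which the paper leaves implicit in citing Proposition~\ref{pr}.
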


\begin{proof}
The isomorphism $i:\mathrm{Aut}(\mathcal{M}_{\lambda})\rightarrow Sym(\lambda){\times}\mathbb{Z}_2$ is defined by the following formulae:
$$i(\phi_f)=(f,1)\quad\hbox{ and }\quad i(\psi_f)=(f,-1), \quad \hbox{ for each bijection }f\in Sym(\lambda).$$
Let us sketch why the map $i$ is an isomorphism. By Proposition~\ref{pr}, $i$ is a bijection. Recall that the set $G_1\cup G_2\cup \{0\}$ generates $\mathcal{M}_{\lambda}$ (see Claim~\ref{claim} in the proof of Proposition~\ref{pr}). Observe that
$$(\phi_f\circ \phi_g)|_{G_1\cup G_2\cup \{0\}}=\phi_{f\circ g}|_{G_1\cup G_2\cup \{0\}}.$$
Since $\phi_f\circ \phi_g$ as well as $\phi_{f\circ g}$ are automorphisms of $\mathcal{M}_{\lambda}$ we deduce that $\phi_f\circ \phi_g=\phi_{f\circ g}$. It follows that
$$i(\phi_f)i(\phi_g)=(f,1)(g,1)=(f\circ g,1)=i(\phi_{f\circ g})=i(\phi_f\circ \phi_g).$$
Observe that $(\psi_f\circ \psi_g)|_{G_1\cup G_2\cup \{0\}}=\phi_{f\circ g}|_{G_1\cup G_2\cup \{0\}}$. It follows that $\psi_f\circ \psi_g=\phi_{f\circ g}$. Then
$$i(\psi_f)i(\psi_g)=(f,-1)(g,-1)=(f\circ g,1)=i(\phi_{f\circ g})=i(\psi_f\circ \psi_g).$$
Repeating the same arguments, one can check that $\psi_f\circ \phi_g=\psi_{f\circ g}$ and $\phi_f\circ \psi_g=\psi_{f\circ g}$. Then
$$i(\psi_f)i(\phi_g)=(f,-1)(g,1)=(f\circ g,-1)=i(\psi_{f\circ g})=i(\psi_f\circ \phi_g) \qquad \hbox{and}$$
$$i(\phi_f)i(\psi_g)=(f,1)(g,-1)=(f\circ g,-1)=i(\psi_{f\circ g})=i(\phi_f\circ \psi_g).$$
Hence the map $i$ is an isomorphism.
\end{proof}

Since the semigroup $\mathcal{M}_1$ is isomorphic to the free inverse semigroup over a singleton with adjoined zero, Theorem~\ref{t} implies the following known result.

\begin{corollary}
The group of automorphisms of the free inverse semigroup over a singleton is isomorphic to $\mathbb{Z}_2$.
\end{corollary}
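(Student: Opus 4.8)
The plan is to deduce this as the special case $\lambda=1$ of Theorem~\ref{t}, once the adjoined zero is accounted for. The starting point is the fact recorded in the introduction (following~\cite{L}) that $\mathcal{M}_1$ is isomorphic to the free inverse semigroup $\mathbf{F}_X$ over a singleton $X$ with a zero adjoined, that is, $\mathcal{M}_1\cong \mathbf{F}_X\cup\{0\}$. So the whole task reduces to transferring the automorphism group across this adjunction and then reading off the right-hand side of Theorem~\ref{t}.

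First I would verify that adjoining the zero does not change the automorphism group. The element $0$ is the unique absorbing element of $\mathbf{F}_X\cup\{0\}$ --- it is the only $z$ satisfying $zx=xz=z$ for every $x$ --- while $\mathbf{F}_X$ itself has no absorbing element. Hence every automorphism of $\mathbf{F}_X\cup\{0\}$ must send $0$ to $0$, so its restriction to $\mathbf{F}_X=(\mathbf{F}_X\cup\{0\})\setminus\{0\}$ is a well-defined automorphism of $\mathbf{F}_X$; conversely, every automorphism of $\mathbf{F}_X$ extends uniquely to $\mathbf{F}_X\cup\{0\}$ by fixing $0$. This correspondence visibly respects composition and inversion, so it is a group isomorphism, giving $\mathrm{Aut}(\mathbf{F}_X)\cong \mathrm{Aut}(\mathbf{F}_X\cup\{0\})=\mathrm{Aut}(\mathcal{M}_1)$.

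Then I would invoke Theorem~\ref{t} with $\lambda=1$. Since $Sym(1)$ is the trivial group, $\mathrm{Aut}(\mathcal{M}_1)\cong Sym(1)\times\mathbb{Z}_2\cong\mathbb{Z}_2$, and chaining the two isomorphisms yields $\mathrm{Aut}(\mathbf{F}_X)\cong\mathbb{Z}_2$, as claimed. I do not expect any genuine obstacle here: the single point that merits a word of justification is the transfer of automorphisms across the adjoined zero, and this is forced by the canonical algebraic characterization of $0$ as the unique absorbing element, which no automorphism can disturb.
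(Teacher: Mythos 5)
Your proposal is correct and follows essentially the same route as the paper, which likewise obtains the corollary by combining the isomorphism $\mathcal{M}_1\cong\mathbf{F}_X\cup\{0\}$ with Theorem~\ref{t} for $\lambda=1$. The only difference is that you spell out the (routine but genuinely needed) step that adjoining a zero does not alter the automorphism group, a point the paper leaves implicit.
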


\section{Topological properties of McAlister semigroups}

A word $a$ of $F_{\lambda}$ is called a {\em suffix} of a word $b\in F_{\lambda}$ if there exists $c\in F_{\lambda}$ such that $b=ca$.
\begin{theorem}\label{iso}
Each non-zero element of a semitopological semigroup $\mathcal{M}_{\lambda}$ is isolated.
\end{theorem}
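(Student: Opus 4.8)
The plan is to show that every non-zero $x$ possesses a \emph{finite} open neighbourhood; since all spaces here are Hausdorff, finite sets are closed, so a finite neighbourhood of $x$ forces $\{x\}$ to be open and hence $x$ to be isolated. The two standing tools are shift-continuity and Lemma~\ref{l3}: for non-zero $a,b$ the fibres $\{s:as=b\}$ and $\{s:sa=b\}$ are finite, so the fibre of any finite composition of shifts over a non-zero point is finite as well. I would also use repeatedly that $\{0\}$ is closed, so $\mathcal{M}_{\lambda}\setminus\{0\}$ is open.

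First I would reduce to the case of idempotents. Given non-zero $x$, set $f=x^{-1}x$, a non-zero idempotent. The shift $l_{x^{-1}}$ is continuous and $l_{x^{-1}}^{-1}(\{f\})=\{s:x^{-1}s=f\}$ is finite by Lemma~\ref{l3} and contains $x$. Hence, the moment $\{f\}$ is known to be open, this set is a finite open neighbourhood of $x$ and $x$ is isolated. Thus it suffices to isolate non-zero idempotents. Next I would reduce an arbitrary idempotent to a maximal one by a \emph{length-decreasing conjugation}. Write a non-zero idempotent as $((b,b),(d,d))$ with $bd^{t}=p$. Choosing $a\in\mathcal{M}_{\lambda}$ with $a^{-1}a\ge ((b,b),(d,d))$, the map $s\mapsto asa^{-1}$ is a composition of two continuous shifts and sends our idempotent to the idempotent $(af)(af)^{-1}$; peeling one letter exactly as in the polycyclic coordinates (where $(\eps,b')(b,b)(b',\eps)=(b_{1},b_{1})$ for $b=b_{1}b'$) lets me arrange that the image lies in a $\mathscr{D}$-class $D_{p'}$ with $|p'|<|p|$. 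By Lemma~\ref{l3} the fibre of this conjugation over $(af)(af)^{-1}$ is finite and contains our idempotent, so ``target isolated $\Rightarrow$ preimage is a finite open neighbourhood''. Inducting on $|p|$, it remains only to isolate the \emph{maximal} idempotents $((a_{\alpha},a_{\alpha}),(\eps,\eps))$ and $((\eps,\eps),(a_{\alpha},a_{\alpha}))$.

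The base case is where I expect the real difficulty, and the reason the machinery above stalls is structural: by Proposition~\ref{p1} products never decrease $\mathscr{D}$-length, so a maximal idempotent cannot be simplified by multiplication, and the four elements of the top class $D_{a_{\alpha}}$ all reduce to one another, so no purely shift-based induction terminates. To break this I would inject genuinely topological information through an embedded bicyclic monoid. Let $c=((a_{\alpha},\eps),(\eps,a_{\alpha}))$; the inverse subsemigroup $B=\langle c\rangle$ is isomorphic to the bicyclic monoid and contains $m=c^{-1}c=((\eps,\eps),(a_{\alpha},a_{\alpha}))$. With the subspace topology $B$ is a Hausdorff semitopological semigroup, hence discrete by the theorem of Bertman and West, so there is an open $U\ni m$ with $U\cap B=\{m\}$.

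The delicate remaining point is to upgrade isolation of $m$ inside $B$ to isolation in $\mathcal{M}_{\lambda}$, and this is the step I expect to be the main obstacle. Here I would argue with a net $s_{i}\to m$, $s_{i}\neq m$, and exploit the identities $mc^{-1}=c^{-1}$, $c^{-1}c=m$, together with the fact that $msm=m$ and $csc=c$ each have only finitely many solutions by two applications of Lemma~\ref{l3}. Applying the continuous shifts by $c$, $c^{-1}$ and $m$ to the net and combining these finite fibres with the discreteness of $B$, the aim is to trap the $s_{i}$ eventually in a fixed finite set, on which convergence to $m$ forces $s_{i}=m$ for large $i$, a contradiction; reconciling the subspace-discreteness of $B$ with the ambient non-bicyclic elements sitting near $m$ is the crux of this argument. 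Once a single maximal idempotent is isolated, the automorphisms $\phi_{f},\psi_{f}$ of Proposition~\ref{pr} and the conjugation reductions above propagate isolation to all maximal idempotents, then to all non-zero idempotents, and finally to all non-zero elements of $\mathcal{M}_{\lambda}$.
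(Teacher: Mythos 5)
Your opening reduction is sound: by Lemma~\ref{l3} the set $\{s: x^{-1}s=x^{-1}x\}$ is finite and contains $x$, so it would indeed suffice to isolate non-zero idempotents. But both of the steps you build on top of it fail, for independent reasons. The ``length-decreasing conjugation'' does not exist in $\mathcal{M}_{\lambda}$: as you yourself observe one sentence later, Proposition~\ref{p1} says that whenever a product of non-zero elements is non-zero, it lies in a class $D_{s_1pr_1}$ whose word contains the class-words of \emph{both} factors as subwords. Applying this twice to $afa^{-1}\neq 0$ with $f\in D_p$ shows that $afa^{-1}\in D_{p'}$ with $|p'|\geq |p|$; a conjugate of an idempotent can never land in a strictly shorter $\mathscr{D}$-class. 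The polycyclic identity $(\eps,b')(b,b)(b',\eps)=(b_1,b_1)$ you invoke is precisely what breaks in $\mathcal{M}_{\lambda}$: the second coordinate must simultaneously keep the product outside the ideal, and the McAlister constraint $ac^t=bd^t\neq\eps$ forces the class word to grow or the product to vanish. So the observation you made only about maximal idempotents (``products never decrease $\mathscr{D}$-length'') in fact dooms your inductive step for \emph{all} idempotents, and the induction on $|p|$ never gets started.

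The base case rests on a false algebraic claim: the inverse subsemigroup $\langle c\rangle$ generated by $c=((a_{\alpha},\eps),(\eps,a_{\alpha}))$ is \emph{not} the bicyclic monoid. One computes $c^{-1}c=((\eps,\eps),(a_{\alpha},a_{\alpha}))$ and $cc^{-1}=((a_{\alpha},a_{\alpha}),(\eps,\eps))$, and neither is an identity for $c$: for instance $(c^{-1}c)\cdot c=((a_{\alpha},\eps),(a_{\alpha},a_{\alpha}^2))\in D_{a_{\alpha}^2}\neq c$, so the defining relation ``$pq=1$'' of $\mathcal{B}$ has no realization. In fact $\mathcal{M}_{\lambda}$ contains no copy of $\mathcal{B}$ at all: a semigroup copy of $\mathcal{B}$ inside an inverse semigroup is automatically an inverse subsemigroup (uniqueness of inverses), hence inherits the ambient $\mathscr{L}$, $\mathscr{R}$ and $\mathscr{D}$; since $\mathcal{B}$ is bisimple with a strictly decreasing chain of idempotents, an embedded copy would produce two distinct comparable idempotents inside a single class $D_p$, contradicting Lemma~\ref{l2}. (Alternatively, the compact semigroup topology of Theorem~\ref{c} --- whose construction does not use the present theorem --- together with the Anderson--Hunter--Koch theorem yields the same conclusion.) What $\langle c\rangle$ actually is, is the free inverse semigroup on one generator, i.e.\ exactly the object whose discreteness is at stake in Corollary~\ref{c2}; so appealing to Bertman--West is not merely unavailable, it makes the argument circular. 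The paper's proof takes a completely different, non-inductive route: for $\lambda\geq 2$ it uses shift-continuity at the non-zero products of $((x,y),(u,v))$ with elements built from two distinct letters $z_1\neq z_2$ to force every member of a suitable neighborhood to have all four coordinates suffixes of $x,y,u,v$; for $\lambda=1$ it removes the four closed retracts $x\cdot\mathcal{M}_1$, $\mathcal{M}_1\cdot x$, $y\cdot\mathcal{M}_1$, $\mathcal{M}_1\cdot y$ determined by two sufficiently long idempotents. In both cases one exhibits a finite open neighborhood directly, with no reduction to idempotents and no embedded bicyclic monoid.
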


\begin{proof}
First, assume that $\lambda=1$ and the free monoid $F_1$ is taken over the singleton $\{a\}$. Fix any non-zero element $((a^n,a^m),(a^k,a^l))\in \mathcal M_1$. Put $t=\max\{n,m,k,l\}+1$. Note that the elements $x=((a^t,a^t),(\eps,\eps))$ and $y=((\eps,\eps),(a^t,a^t))$ are idempotents in $\mathcal M_1$. Since shifts are continuous on the Hausdorff space $\mathcal M_1$, the retracts
$$R_1=x\cdot \mathcal M_1, \qquad R_2=\mathcal{M}_1\cdot x, \qquad R_3=y\cdot\mathcal{M}_1\quad \hbox{ and }\quad R_4=\mathcal{M}_1\cdot y$$
are closed subsets of $\mathcal{M}_1$. It is easy to check that the mentioned above retracts do not contain the point $((a^n,a^m),(a^k,a^l))$. It follows that the set $U=\mathcal{M}_1\setminus (\cup_{i=1}^4R_i)$ is an open neighborhood of $((a^n,a^m),(a^k,a^l))$. Let us show that the set $U$ is finite. Assume that $z=((a^p,a^q),(a^r,a^s))\in U$. Since $z\notin R_1$ we deduce that $p<t$, because otherwise $z=xz\in x\cdot \mathcal{M}_1=R_1$. Since $z\notin R_2$ we get that $q<t$, because otherwise $z=zx\in \mathcal{M}_1\cdot x=R_2$. Similarly one can check that the assumption $z\notin R_3$ provides that $r<t$. Finally, we deduce that $s<t$ from the condition $z\notin R_4$. It follows that $U\subset \{((a^p,a^q),(a^r,a^s))\in\mathcal{M}_1: \max\{p,q,r,s\}<t\}$, witnessing that the set $U$ is finite. The Hausdorffness of $\mathcal{M}_1$ implies that the point $((a^n,a^m),(a^k,a^l))$ is isolated.

Assuming that $\lambda\geq 2$, fix any non-zero point $((x,y),(u,v))$ of the McAlister semigroup $\mathcal{M}_{\lambda}$ and two distinct words $z_1$ and $z_2$ in $F_{\lambda}$ such that $|z_1|=|z_2|=1$. Observe that the following statements hold:
\begin{enumerate}
\item $((x,y),(u,v))\cdot((z_1y,z_1y),(z_1v,z_1v))=((z_1x,z_1y),(z_1u,z_1v))\neq 0$;
\item $((x,y),(u,v))\cdot((z_2y,z_2y),(z_2v,z_2v))=((z_2x,z_2y),(z_2u,z_2v))\neq 0$;
\item $((z_1x,z_1x),(z_1u,z_1u))\cdot ((x,y),(u,v))=((z_1x,z_1y),(z_1u,z_1v))\neq 0$;
\item $((z_2x,z_2x),(z_2u,z_2u))\cdot ((x,y),(u,v))=((z_2x,z_2y),(z_2u,z_2v))\neq 0.$
\end{enumerate}
Since $\mathcal{M}_{\lambda}$ is a semitopological semigroup,
there exist open neighborhoods $U_1$, $U_2$, $U_3$, $U_4$ of $((x,y),(u,v))$ such that
 $$U_1\cdot((z_1y,z_1y),(z_1v,z_1v))\cup U_2\cdot((z_2y,z_2y),(z_2v,z_2v))\subset \mathcal{M}_{\lambda}\setminus\{0\}\hbox{ and}$$
$$((z_1x,z_1x),(z_1u,z_1u))\cdot U_3\cup ((z_2x,z_2x),(z_2u,z_2u))\cdot U_4\subset \mathcal{M}_{\lambda}\setminus\{0\}.$$
Let $U=\cap_{i=1}^4U_i$ and fix any non-zero element $((a,b),(c,d))\in U$. By the choice of $U$, the following statements hold:
\begin{enumerate}
\item $((a,b),(c,d))\cdot((z_1y,z_1y),(z_1v,z_1v))\neq 0$;
\item $((a,b),(c,d))\cdot((z_2y,z_2y),(z_2v,z_2v))\neq 0$;
\item $((z_1x,z_1x),(z_1u,z_1u))\cdot ((a,b),(c,d))\neq 0$;
\item $((z_2x,z_2x),(z_2u,z_2u))\cdot ((a,b),(c,d))\neq 0$.
\end{enumerate}
The above statements imply that the following conditions hold:
\begin{enumerate}
\item $[b$ is a suffix of $z_1y$, or $z_1y$ is a suffix of $b]$ and $[d$ is a suffix of $z_1v$, or $z_1v$ is a suffix of $d]$;
\item $[b$ is a suffix of $z_2y$, or $z_2y$ is a suffix of $b]$ and $[d$ is a suffix of $z_2v$, or $z_2v$ is a suffix of $d]$;
\item $[a$ is a suffix of $z_1x$, or $z_1x$ is a suffix of $a]$ and $[c$ is a suffix of $z_1u$, or $z_1u$ is a suffix of $c]$;
\item $[a$ is a suffix of $z_2x$, or $z_2x$ is a suffix of $a]$ and $[c$ is a suffix of $z_2u$, or $z_2u$ is a suffix of $c]$.
\end{enumerate}

Note that words $z_1y$ and $z_2y$ are different and $|z_1y|=|z_2y|$. Then the following cases are not possible:
\begin{itemize}
\item $z_1y$ is a suffix of $b$ and $z_2y$ is a suffix of $b$;
\item $z_1y$ is a suffix of $b$ and $b$ is a suffix of $z_2y$;
\item $b$ is a suffix of $z_1y$ and $z_2y$ is a suffix of $b$.
\end{itemize}
Hence conditions 1 and 2 imply that $b$ is a suffix of $z_1y$ and $z_2y$, witnessing that $b$ is a suffix of $y$.

Since the words $z_1v$ and $z_2v$ are different and $|z_1v|=|z_2v|$, the following cases are impossible:

\begin{itemize}
\item $z_1v$ is a suffix of $d$ and $z_2v$ is a suffix of $d$;
\item $z_1v$ is a suffix of $d$ and $d$ is a suffix of $z_2v$;
\item $d$ is a suffix of $z_1v$ and $z_2v$ is a suffix of $d$.
\end{itemize}
Hence conditions 1 and 2 provide that $d$ is a suffix of $v$.
Using similar arguments, one can check that conditions 3 and 4 ensure that $a$ is a suffix of $x$ and $c$ is a suffix of $u$.
It remains to observe that for a fixed words $x,y,u,v\in F_{\lambda}$ there are only finitely many elements $((a,b),(c,d))\in\mathcal{M}_{\lambda}$ such that $a$ is a suffix of $x$, $b$ is a suffix of $y$, $c$ is a suffix $u$ and $d$ is a suffix of $v$. Hence the neighborhood $U$ is finite, witnessing that the point $((x,y),(u,v))$ is isolated.
\end{proof}

\begin{corollary}\label{c2}
A semitopological free inverse semigroup over a singleton is discrete.
\end{corollary}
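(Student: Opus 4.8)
The plan is to reduce the statement to Theorem~\ref{iso} by viewing the free inverse semigroup over a singleton as the subsemigroup $\mathcal{M}_1\setminus\{0\}$ of $\mathcal{M}_1$. Recall from the introduction that $\mathcal{M}_1$ is isomorphic to the free inverse semigroup over a singleton with adjoined zero; concretely, since reversal acts trivially on one-letter words, the product of any two non-zero elements of $\mathcal{M}_1$ is again non-zero, so $\mathcal{M}_1\setminus\{0\}$ is a subsemigroup and $\mathcal{M}_1$ arises from it by adjoining a zero. This non-vanishing of products is precisely the fact that makes the whole argument go through.

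First I would fix an arbitrary Hausdorff shift-continuous topology $\tau$ on $\mathcal{M}_1\setminus\{0\}$ and extend it to a topology $\tilde\tau$ on $\mathcal{M}_1$ by declaring the point $0$ to be isolated, that is, $\tilde\tau=\tau\cup\{U\cup\{0\}:U\in\tau\}$. A routine check shows $\tilde\tau$ is a topology in which $\mathcal{M}_1\setminus\{0\}$ is an open subspace carrying exactly the topology $\tau$, and $(\mathcal{M}_1,\tilde\tau)$ is Hausdorff because $\tau$ is Hausdorff and $0$ is an isolated point distinct from every other point.

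The key step is to verify that $(\mathcal{M}_1,\tilde\tau)$ is a semitopological semigroup. For $x=0$ both shifts are the constant map onto $0$ and hence continuous. For a fixed $x\in\mathcal{M}_1\setminus\{0\}$, the left shift $l_x$ carries $\mathcal{M}_1\setminus\{0\}$ into itself and fixes $0$, and because products of non-zero elements are non-zero we have $l_x^{-1}(\{0\})=\{0\}$. Combining this with the $\tau$-continuity of the restriction of $l_x$ to $\mathcal{M}_1\setminus\{0\}$, one checks that the $\tilde\tau$-preimage under $l_x$ of each basic open set $U$ or $U\cup\{0\}$ is again $\tilde\tau$-open; the same applies to right shifts. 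This is the one genuinely technical point, and I expect it to be the main (though mild) obstacle, since it rests entirely on the reduction $l_x^{-1}(\{0\})=r_x^{-1}(\{0\})=\{0\}$.

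Finally I would invoke Theorem~\ref{iso} with $\lambda=1$: every non-zero element of the Hausdorff semitopological semigroup $(\mathcal{M}_1,\tilde\tau)$ is isolated. Since the non-zero elements are precisely the points of $\mathcal{M}_1\setminus\{0\}$, and any singleton $\{s\}$ with $s\neq0$ that is $\tilde\tau$-open must already belong to $\tau$, every point of $\mathcal{M}_1\setminus\{0\}$ is $\tau$-isolated. Hence $\tau$ is discrete, which is exactly the assertion of the corollary.
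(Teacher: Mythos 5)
Your proof is correct and matches the paper's intended derivation: the paper states this corollary with no proof at all, as an immediate consequence of Theorem~\ref{iso} together with the identification of the free inverse semigroup over a singleton with $\mathcal{M}_1\setminus\{0\}$. The zero-adjunction bridge you supply is exactly the routine step the paper leaves implicit, and it works for precisely the reason you isolate: products of non-zero elements of $\mathcal{M}_1$ are non-zero, so $l_x^{-1}(\{0\})=r_x^{-1}(\{0\})=\{0\}$ for every $x\neq 0$, making the extension $\tilde\tau$ shift-continuous and Hausdorff, whence Theorem~\ref{iso} applies.
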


A word $x\in F_{\lambda}$ is called a {\em subword} of a word $y\in F_{\lambda}$ if there exist $a,b\in F_{\lambda}$ such that $y=axb$.

Despite the fact that compact topological semigroups cannot contain neither bicyclic monoid nor polycyclic monoids~\cite{Anderson-Hunter-Koch-1965,BardGut-2016(1)}, the next theorem shows that McAlister semigroups admit compact inverse semigroup topology.

\begin{theorem}\label{c}
There exists the unique topology $\tau$ on $\mathcal{M}_{\lambda}$ such that $(\mathcal{M}_{\lambda},\tau)$ is a compact topological inverse semigroup.
\end{theorem}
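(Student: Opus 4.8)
The plan is to exhibit $\tau$ explicitly as the Alexandroff (one-point) compactification of the discrete space $\mathcal{M}_{\lambda}\setminus\{0\}$, verify that it makes $\mathcal{M}_{\lambda}$ a compact topological inverse semigroup, and then deduce uniqueness from Theorem~\ref{iso}. Concretely, I declare a set $U\subseteq\mathcal{M}_{\lambda}$ to be $\tau$-open iff either $0\notin U$, or $0\in U$ and $\mathcal{M}_{\lambda}\setminus U$ is finite. Since $F_{\lambda}$ is infinite, so is $\mathcal{M}_{\lambda}$, and this is precisely the one-point compactification of the discrete (hence locally compact Hausdorff) space $\mathcal{M}_{\lambda}\setminus\{0\}$; it is therefore compact Hausdorff, with every non-zero element isolated and $0$ its unique non-isolated point.

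For uniqueness I would take any topology $\tau'$ making $\mathcal{M}_{\lambda}$ a compact topological inverse semigroup. Such a space is Hausdorff and semitopological, so by Theorem~\ref{iso} each non-zero point is $\tau'$-isolated. If $W$ is any $\tau'$-open neighbourhood of $0$, then $\mathcal{M}_{\lambda}\setminus W$ is closed in a compact space, hence compact, and consists only of isolated points; a compact discrete space is finite, so $\mathcal{M}_{\lambda}\setminus W$ is finite. Thus every $\tau'$-neighbourhood of $0$ is cofinite while every non-zero point is $\tau'$-isolated, which says exactly $\tau'=\tau$.

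Continuity of the operations for $\tau$ is then what remains. Inversion is an involutive bijection fixing $0$ and permuting the isolated non-zero points, hence a homeomorphism, so it is continuous. For multiplication $m$ I check continuity at a pair $(s,t)$ according to the value of $st$. If $st\neq 0$ then $s,t\neq 0$, the singletons $\{s\},\{t\},\{st\}$ are open, and $m(\{s\}\times\{t\})=\{st\}$. If $st=0$ but one factor is non-zero, say $t\neq 0$, then for a cofinite neighbourhood $W\ni 0$ with finite complement $F$ I put $V=\{t\}$ and $U=\mathcal{M}_{\lambda}\setminus\{x\neq 0: xt\in F\}$; the excluded set is finite by Lemma~\ref{l3}, so $U$ is a cofinite neighbourhood of $0$, and $Ut\subseteq W$ (symmetrically when $s\neq 0$).

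The genuine obstacle is continuity of $m$ at $(0,0)$: I must produce cofinite neighbourhoods $U,V\ni 0$ with $UV\subseteq W$, i.e. avoiding the finite set $F=\mathcal{M}_{\lambda}\setminus W$. This reduces to the key finiteness statement that for every non-zero $f\in\mathcal{M}_{\lambda}$ the set $\{(x,y)\in(\mathcal{M}_{\lambda}\setminus\{0\})^2: xy=f\}$ is finite. To prove it, let $w_f$ be the length-$\ell$ word attached to the $\mathscr{D}$-class $D_{w_f}\ni f$. If $xy=f$ with $x\in D_p$ and $y\in D_q$, then Proposition~\ref{p1} yields $w_f=s_1pr_1=s_2qr_2$, so $p$ and $q$ are factors of the single fixed word $w_f$; a fixed word has only finitely many factors, and each $D_p$ is finite by Lemma~\ref{l2}, so $x$ and $y$ range over finite sets. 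Hence $P=\{(x,y): x,y\neq 0,\ xy\in F\}$ is finite; taking its (finite) projections $A,B$ and setting $U=\mathcal{M}_{\lambda}\setminus A$, $V=\mathcal{M}_{\lambda}\setminus B$, any product $xy$ with $x\in U$, $y\in V$ either is $0\in W$ or satisfies $(x,y)\notin P$, whence $xy\in W$; thus $UV\subseteq W$. This finishes continuity of $m$ and, with the uniqueness step, proves the theorem; it also supplies the compact semigroup topology promised in the proof of Lemma~\ref{l1}(5).
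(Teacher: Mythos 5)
Your proposal is correct, and much of it coincides with the paper's proof: the topology you construct is literally the same one, since each $\mathscr{D}$-class is finite by Lemma~\ref{l2}, so the paper's basic neighbourhoods $U_A=\mathcal{M}_{\lambda}\setminus(\cup_{a\in A}D_a)$ are exactly the cofinite sets containing $0$, i.e.\ your one-point compactification of the discrete space $\mathcal{M}_{\lambda}\setminus\{0\}$; and your uniqueness argument (non-zero points isolated by Theorem~\ref{iso}, hence complements of neighbourhoods of $0$ are compact discrete, hence finite) is precisely the argument the paper compresses into the sentence that uniqueness ``follows from Theorem~\ref{iso}''. Where you genuinely diverge is the verification that multiplication is continuous. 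The paper gives one uniform argument: for a basic neighbourhood $U_A$ of $0$ it passes to $B$, the set of all non-empty subwords of words in $A$, and shows via Proposition~\ref{p1} that $U_B$ is a two-sided ideal of $\mathcal{M}_{\lambda}$; then $x\cdot U_B\cup U_B\cdot x\cup U_B\cdot U_B\subset U_B\subset U_A$ settles translations and the point $(0,0)$ simultaneously, with no need of Lemma~\ref{l3}. You instead check continuity pointwise: the translation cases via Lemma~\ref{l3} (finiteness of the solution sets of $xt=f$ and $sx=f$), and the case $(0,0)$ via the finiteness of factorizations of a fixed non-zero element, which you correctly extract from Proposition~\ref{p1} together with Lemma~\ref{l2}, the injectivity of $p\mapsto D_p$, and the fact that a fixed word has only finitely many subwords. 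Both routes rest on the same combinatorial core, Proposition~\ref{p1}; the paper's ideal trick is slicker and avoids Lemma~\ref{l3} entirely, while your decomposition isolates the ``finitely many factorizations'' fact explicitly, which is a statement of some independent interest. Your treatment of inversion (any bijection fixing $0$ is a homeomorphism of this topology) is also a harmless simplification of the paper's observation that $U_A^{-1}=U_A$.
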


\begin{proof}
The topology $\tau$ is defined as follows:
\begin{itemize}
\item each non-zero element of $\mathcal{M}_{\lambda}$ is isolated;
\item the family $\{U_{A}:A$ is a finite subset of $F_{\lambda}\setminus\{\eps\}\}$ forms an open neighborhood base at $0$, where $U_A=\mathcal{M}_{\lambda}\setminus (\cup_{a\in A}D_a)$.
\end{itemize}

Obviously, the space $(\mathcal{M}_{\lambda},\tau)$ is Hausdorff. Since each $\mathscr D$-class in $\mathcal{M}_{\lambda}$ is finite (recall Lemma~\ref{l2}), the space $(\mathcal{M}_{\lambda},\tau)$ is compact. Since $D_a^{-1}=D_a$ for each $a\in F_{\lambda}\setminus\{\eps\}$, we have that $U_A^{-1}=U_A$ for each finite subset $A\subset F_{\lambda}\setminus\{\eps\}$. It follows that the inversion is continuous in $(\mathcal{M}_{\lambda},\tau)$. To prove that $(\mathcal{M}_{\lambda},\tau)$ is a topological semigroup it is sufficient to show that for each basic open neighborhood $U_A$ of $0$ and $x\in \mathcal{M}_{\lambda}\setminus\{0\}$ there exists a basic open neighborhood $U_B$ of $0$ such that $x\cdot U_B\cup U_B\cdot x\cup U_B\cdot U_B\subset U_A$. So, fix any basic open neighborhood $U_A$ of $0$, where $A=\{a_1,\ldots,a_n\}\subset F_{\lambda}\setminus\{\eps\}$. Let
$$B=\{b:\hbox{ there exists }i\leq n\hbox{ such that }b \hbox{ is a subword of }a_i\}\setminus\{\eps\}.$$
We claim that $U_B$ is a two-sided ideal of $\mathcal{M}_{\lambda}$. Indeed, fix any $x\in\mathcal{M}_{\lambda}$ and $y\in D_q\subset U_B$. Note that $q\notin B$.
If $xy\neq 0$, then Proposition~\ref{p1} implies that $xy\in D_{sqr}$ where $s,r\in F_{\lambda}$. By the choice of $B$, $sqr\in F_{\lambda}\setminus B$. Thus $xy\in D_{sqr}\subset U_B$.
Similarly, one can check that $yx\in U_B$. Hence $U_B$ is a two-sided ideal of $\mathcal{M}_{\lambda}$. At this point it is easy to see that $x\cdot U_B\cup U_B\cdot x\cup U_B\cdot U_B\subset U_B\subset U_A$, witnessing that $(\mathcal{M}_{\lambda},\tau)$ is a topological inverse semigroup.
The uniqueness of the topology $\tau$ follows from Theorem~\ref{iso}.
\end{proof}

Eberhart and Selden~\cite{Eberhart-Selden-1969} showed that if the bicyclic monoid ${\mathcal{B}}$ is a dense proper subsemigroup of a topological semigroup $S$, then $I=S\setminus{\mathcal{B}}$ is a two-sided ideal of the semigroup $S$. Mesyan, Mitchell, Morayne and P\'{e}resse~\cite{Mesyan-Mitchell-Morayne-Peresse-2013} showed that if a polycyclic monoid $\mathcal{P}_{\lambda}$ is a dense proper subsemigroup of a topological semigroup $S$, then $I=(S\setminus{\mathcal{P}_{\lambda}})\cup\{0\}$ is a two-sided ideal of the semigroup $S$. Note that the above statements still hold if $S$ is a semitopological semigroup~\cite{BardGut-2016(1)}. The following lemma shows that McAlsiter semigroups also possess this property.

\begin{lemma}
Let $S$ be a semitopological semigroup which contains $\mathcal{M}_{\lambda}$ as a dense proper subsemigroup. Then $(S\setminus \mathcal{M}_{\lambda})\cup\{0\}$ is a two-sided ideal in $S$.
\end{lemma}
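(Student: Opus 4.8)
The plan is to show that the set $J=(S\setminus \mathcal{M}_{\lambda})\cup\{0\}$ is closed under multiplication by arbitrary elements of $S$ on both sides. Fix any $s\in J$ and any $t\in S$; I must verify that $st\in J$ and $ts\in J$. The natural strategy is the one used by Eberhart--Selden and by Mesyan--Mitchell--Morayne--P\'eresse: argue by contradiction using the density of $\mathcal{M}_{\lambda}$ in $S$ together with separate continuity of the shifts. So suppose, to get a contradiction, that $st=m$ for some non-zero $m\in \mathcal{M}_{\lambda}$ (the case $ts=m$ is symmetric, and if the product lands in $\{0\}$ we are already inside $J$). Since $s\in S\setminus \mathcal{M}_{\lambda}$, I want to exploit that $s$ can be approximated by points of $\mathcal{M}_{\lambda}$ but is not itself in $\mathcal{M}_{\lambda}$.

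The key tool is \textbf{Lemma~\ref{l3}}, which guarantees that for the fixed non-zero elements the relevant solution sets are finite. First I would handle the subcase where $t\in \mathcal{M}_{\lambda}\setminus\{0\}$. Then the right shift $r_t\colon x\mapsto xt$ is continuous, and $r_t^{-1}(\{m\})$ is, by continuity, a set whose closure behaves well; but by Lemma~\ref{l3} the set $\{x\in\mathcal{M}_{\lambda}: xt=m\}$ is finite, hence (by Theorem~\ref{iso}, which makes every non-zero point of $\mathcal{M}_{\lambda}$ isolated) closed and discrete in $S$. Using density, any neighborhood of $s$ meets $\mathcal{M}_{\lambda}$; if $s$ were a limit of points $x_\alpha\in\mathcal{M}_{\lambda}$ with $x_\alpha t=m$, finiteness forces $s$ itself to lie in that finite set, contradicting $s\notin\mathcal{M}_{\lambda}$. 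More carefully, one separates $m$ from $0$ and from the finitely many preimages in $\mathcal{M}_{\lambda}$ to produce a neighborhood of $s$ on which $r_t$ avoids $m$, contradicting $st=m$.

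The remaining subcase, where $t$ itself lies in $J$ (in particular $t\in S\setminus\mathcal{M}_{\lambda}$), is where I expect the \emph{main obstacle}. Here neither shift by $t$ is a shift by a concrete element of $\mathcal{M}_{\lambda}$, so Lemma~\ref{l3} does not apply directly. The idea is to reduce to the previous case: approximate $t$ by elements of $\mathcal{M}_{\lambda}$ using density, and use separate continuity of the left shift $l_s$ to transfer the equation $st=m$ to equations $s t' = m'$ with $t'\in\mathcal{M}_{\lambda}$ and $m'$ near $m$; since $m$ is isolated, eventually $m'=m$, and one applies the finiteness of $\{x : sx=m\}$-type sets. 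The delicate point is that $s$ is not in $\mathcal{M}_{\lambda}$, so one cannot quote Lemma~\ref{l3} with $s$ as the fixed element; instead I would fix a non-zero $\mathcal{M}_{\lambda}$-point $a$ approximating $s$, write $m = st$ as a limit of $a_\alpha t_\beta \in\mathcal{M}_{\lambda}$, and invoke isolatedness to get an honest equation in $\mathcal{M}_{\lambda}$ to which Lemma~\ref{l3} applies, deriving that $s$ or $t$ must actually be the isolated point, i.e.\ must lie in $\mathcal{M}_{\lambda}$ --- the sought contradiction. Combining both subcases yields $st\in J$, and symmetric reasoning with the right shift gives $ts\in J$, so $J$ is a two-sided ideal of $S$.
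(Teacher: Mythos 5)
Your subcase 1 (the mixed product: one factor in $\mathcal{M}_{\lambda}\setminus\{0\}$, the other in $S\setminus\mathcal{M}_{\lambda}$) is sound and is essentially the paper's argument: isolatedness of $m$ (Theorem~\ref{iso} plus density) makes the preimage of $\{m\}$ under the relevant shift an open neighborhood of $s$, which by density contains infinitely many points of $\mathcal{M}_{\lambda}$, contradicting the finiteness in Lemma~\ref{l3}. The genuine gap is in your subcase 2, where both $s,t\in S\setminus\mathcal{M}_{\lambda}$. Your proposed repair --- ``write $m=st$ as a limit of $a_{\alpha}t_{\beta}\in\mathcal{M}_{\lambda}$,'' approximating both factors simultaneously --- is not available: $S$ is only \emph{semitopological}, so multiplication is separately continuous, and a double net of products $a_{\alpha}t_{\beta}$ with $a_{\alpha}\to s$, $t_{\beta}\to t$ need not converge to $st$. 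Any step that approximates both factors at once is secretly using joint continuity and fails in this setting.

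The irony is that the first half of your own subcase-2 sketch already finishes the proof, and this is exactly what the paper does. From $st=m$ with $m$ isolated, continuity of the \emph{single} shift $l_s$ (the paper uses $r_t$ instead, with left and right swapped) makes $l_s^{-1}(\{m\})$ an open neighborhood of $t$; by density and Hausdorffness it contains some $t'\in\mathcal{M}_{\lambda}\setminus\{0\}$, so $st'=m$. Now stop: this exhibits a product of an element of $S\setminus\mathcal{M}_{\lambda}$ with an element of $\mathcal{M}_{\lambda}\setminus\{0\}$ landing in $\mathcal{M}_{\lambda}\setminus\{0\}$, contradicting the inclusion you already established in subcase 1. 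No second appeal to Lemma~\ref{l3} is needed, and the set $\{x:sx=m\}$ with $s\notin\mathcal{M}_{\lambda}$ never has to be controlled. A smaller omission: you never verify the products involving the element $0\in(S\setminus\mathcal{M}_{\lambda})\cup\{0\}$; for these one needs that $0$ remains a two-sided zero of all of $S$, which follows because the continuous map $l_0$ (resp.\ $r_0$) agrees with the constant map $0$ on the dense subset $\mathcal{M}_{\lambda}$ and $S$ is Hausdorff --- this is the first line of the paper's proof.
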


\begin{proof}
Since $\mathcal{M}_{\lambda}$ is dense in $S$ and $S$ is a semitopological semigroup, we obtain that $0s=s0=0$ for any $s\in S$.
Let us show that $\mathcal{M}_{\lambda}\cdot (S\setminus\mathcal{M}_{\lambda})\subset (S\setminus\mathcal{M}_{\lambda})\cup\{0\}$.
Assuming the contrary, fix $s\in S\setminus\mathcal{M}_{\lambda}$ and $x\in \mathcal{M}_{\lambda}\setminus\{0\}$ such that
$y=xs\in \mathcal{M}_{\lambda}\setminus\{0\}$. By Theorem~\ref{iso}, $y$ is an isolated point in $\mathcal{M}_{\lambda}$. Since $\mathcal{M}_{\lambda}$ is dense in $S$ we get that $y$ is isolated in $S$. The continuity of left shifts in $S$ implies that there exists an open neighborhood $U$ of $s$ such that $x\cdot U=\{y\}$. Since $s$ is an accumulation point of the set $\mathcal{M}_{\lambda}$, we have that the set $U\cap\mathcal{M}_{\lambda}$ is infinite. But this contradicts Lemma~\ref{l3}. The obtained contradiction implies that $\mathcal{M}_{\lambda}\cdot (S\setminus\mathcal{M}_{\lambda})\subset (S\setminus\mathcal{M}_{\lambda})\cup\{0\}$.

Now let us show that $(S\setminus \mathcal{M}_{\lambda})\cup\{0\}$ is a left ideal in $S$. At this point it is sufficient to prove that $st\in (S\setminus \mathcal{M}_{\lambda})\cup\{0\}$ for any $s,t\in S\setminus \mathcal{M}_{\lambda}$. To derive a contradiction, assume that there exist elements $s,t\in S\setminus \mathcal{M}_{\lambda}$ such that $y=st\in \mathcal{M_{\lambda}}\setminus\{0\}$. Recall that the point $y$ is isolated in $S$. Since $S$ is a semitopological semigroup, there exists an open neighborhood $U$ of $s$ such that $U\cdot t=\{y\}$. The density of $\mathcal{M}_{\lambda}$ in $S$ yields that the open set $U$ contains a point $x\in \mathcal{M}_{\lambda}\setminus\{0\}$. Then $xt=y\in\mathcal{M}_{\lambda}\setminus\{0\}$ which contradicts to the inclusion $\mathcal{M}_{\lambda}\cdot (S\setminus\mathcal{M}_{\lambda})\subset (S\setminus\mathcal{M}_{\lambda})\cup\{0\}$. Hence $(S\setminus \mathcal{M}_{\lambda})\cup\{0\}$ is a left ideal in $S$.
Similarly it can be showed that $(S\setminus \mathcal{M}_{\lambda})\cup\{0\}$ is a right ideal in $S$.
\end{proof}

\section{Locally compact semitopological McAlister semigroups}
Locally compact topological graph inverse semigroups were investigated by Mesyan, Mitchell, Moray\-ne and P\'{e}resse in~\cite{Mesyan-Mitchell-Morayne-Peresse-2013}. They showed that a locally compact topological graph inverse semigroup over a finite graph $E$ is discrete. This result implies that a locally compact topological polycyclic monoids $\mathcal{P}_k$, $k\in\mathbb{N}$ are discrete. Gutik and the author~\cite{BardGut-2016(1)} showed that for every non-zero cardinal $\lambda$ a locally compact topological polycyclic monoid $\mathcal{P}_{\lambda}$ is discrete. Graph inverse semigroups which admit only the discrete locally compact semigroup topology were characterized in~\cite{Bardyla-2017(1)}. In~\cite{Gutik-2015} Gutik showed that a locally compact semitopological bicyclic monoid with adjoined zero (i.e., polycyclic monoid $\mathcal{P}_1$) is either compact or discrete. It turns out that this kind of dichotomy also holds for some generalizations of the bicyclic monoid, including all polycyclic monoids.

\begin{proposition}\label{pro}
Assume that $\mathcal{M}_{k}$ is a locally compact semitopological semigroup for some positive integer $k$. Let $U$ be a compact open neighborhood of $0$ and $A_U=\{a\in F_{\lambda}: D_a\cap U\neq \emptyset\}$. Then $D_a\subset U$ for all but finitely many $a\in A_U$.
\end{proposition}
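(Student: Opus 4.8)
The plan is to combine two ingredients: a uniform combinatorial description of how the finitely many generators act on the grid-like structure of each $\mathscr{D}$-class, and the elementary topological observation that a compact subset of $U$ which consists of isolated points and avoids $0$ must be finite. Throughout, write the alphabet as $\{a_\alpha:\alpha\in k\}$, so that $T:=G_1\cup G_2$ (with $G_1,G_2$ as in the proof of Proposition~\ref{pr}) is a \emph{fixed finite} set of $2k$ elements.

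First I would record the shape of a $\mathscr{D}$-class. By Lemma~\ref{l2} and Lemma~\ref{l1}, for $a=a_1\cdots a_n\in F_{k}\setminus\{\eps\}$ the class $D_a$ consists of the $(n+1)^2$ elements
$$g_{i,j}=((a_1\cdots a_i,\,a_1\cdots a_j),(a_n\cdots a_{i+1},\,a_n\cdots a_{j+1})),\qquad 0\le i,j\le n,$$
where $i$ is the $\mathscr{R}$-coordinate (it determines the first component of each pair) and $j$ is the $\mathscr{L}$-coordinate. Viewing these as the vertices of the grid graph on $\{0,\dots,n\}^2$, with an edge between $(i,j)$ and $(i',j')$ whenever they agree in one coordinate and differ by $1$ in the other, this graph is connected.

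The crucial step is to realise every edge of this grid by a single left or right shift by an element of $T$, \emph{uniformly in $a$}. A direct computation with the operation of $\mathcal{M}_{\lambda}$ shows that left multiplication by $((\eps,a_\alpha),(a_\alpha,\eps))\in G_1$ sends $g_{i,j}$ to $g_{i-1,j}$ exactly when $a_\alpha=a_i$, while left multiplication by $((a_\alpha,\eps),(\eps,a_\alpha))\in G_2$ sends $g_{i,j}$ to $g_{i+1,j}$ exactly when $a_\alpha=a_{i+1}$; hence every $\mathscr{R}$-edge, in either direction, is a left shift by a generator. Since inversion satisfies $g_{i,j}^{-1}=g_{j,i}$ and $T^{-1}=T$ (because $G_1^{-1}=G_2$), the symmetric statement for right shifts handles every $\mathscr{L}$-edge. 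I expect this bookkeeping---verifying that one and the same finite family $T$ of shifts suffices for all classes $D_a$ simultaneously---to be the main obstacle, since it is precisely what lets a fixed family of continuous maps control classes $D_a$ of unbounded size.

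Granting this, I would finish topologically. Fix $t\in T$. Because the left shift $L_t\colon s\mapsto ts$ is continuous and $t\cdot 0=0$, the set $L_t^{-1}(U)$ is an open neighbourhood of $0$, so $\{h\in U: th\notin U\}=U\setminus L_t^{-1}(U)$ is a closed, hence compact, subset of $U$ that omits $0$. By Theorem~\ref{iso} every point of $U\setminus\{0\}$ is isolated, so this compact set is discrete and therefore finite; the analogous statement holds for $\{h\in U: ht\notin U\}$ via right shifts. Consequently
$$\mathcal{B}:=\bigcup_{t\in T}\big(\{h\in U: th\notin U\}\cup\{h\in U: ht\notin U\}\big)$$
is finite. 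Now suppose $a\in A_U$ is \emph{bad}, meaning $D_a\cap U\neq\emptyset$ but $D_a\not\subset U$. Then the grid $D_a$ has vertices both inside and outside $U$, so by connectedness some edge joins an $h\in D_a\cap U$ to an $h'\in D_a\setminus U$; realising this edge by a shift $t\in T$ with $th=h'$ or $ht=h'$ places $h$ in $\mathcal{B}\cap D_a$. As distinct $\mathscr{D}$-classes are disjoint, assigning such an $h$ to each bad class is injective, so there are at most $|\mathcal{B}|$ bad classes. Hence $D_a\subset U$ for all but finitely many $a\in A_U$.
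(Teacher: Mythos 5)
Your proof is correct and follows essentially the same route as the paper's: both exploit the ``egg-box'' grid structure of the $\mathscr{D}$-classes $D_a$, realize a boundary edge of each bad class by a left or right shift by one of finitely many fixed generators, and combine continuity of that shift at $0$ with compactness of $U$ and the isolatedness of non-zero points (Theorem~\ref{iso}) to conclude finiteness. The only difference is organizational: the paper argues by contradiction, applying the pigeonhole principle twice to reduce an infinite family of bad classes to a single shift direction and a single letter, whereas you take the union of the finite ``boundary'' sets $U\setminus L_t^{-1}(U)$ and $U\setminus R_t^{-1}(U)$ over the fixed finite generating set $T$ up front and count bad classes directly --- a cleaner packaging of the same argument.
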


\begin{proof}
Assume that $\mathcal{M}_{k}$ is a locally compact semitopological semigroup. Theorem~\ref{iso} implies that all non-zero points in $\mathcal{M}_{k}$ are isolated. Consequently, each open neighborhood of $0$ is closed. Let $U$ be any compact neighborhood of $0$. The compactness of $U$ implies that for each open neighborhood $V\subset U$ of $0$ the set $U\setminus V$ is finite.

Enumerate $A_U=\{a_n:n\in\mathbb{N}\}$. To derive a contradiction, assume that there exists an infinite subset $B\subset \mathbb{N}$ such that $D_{a_n}\setminus U\neq \emptyset$ for each $n\in B$. Let us imagine $\mathscr{D}$-classes $D_{a_n}, n\in B$ using the classical ``egg-box'' picture~\cite[Chapter 2]{Clifford-Preston-1961-1967}. Then $D_{a_n}$ can be associated with the set $S_n=\{(i,j):i,j\leq |a_n|+1\}$ (recall Lemma~\ref{l2}). If for some set $A$ (in our case the role of the set $A$ will play $U$) the sets $S_n\setminus A$ and $S_n\cap A$ are nonempty, then there exists an element $(i,j)\in A\cap S_n$ such that at least one of
its ``neighbors'' is not in $A$. That is
$$(i-1,j)\in S_n\setminus A,\quad\hbox{ or }\quad (i,j-1)\in S_n\setminus A,\quad\hbox{ or }\quad(i+1,j)\in S_n\setminus A,\quad\hbox{ or }\quad(i,j+1)\in S_n\setminus A.$$
Note that the notion of ``neighbor'' in $D_{a_n}$ is a bit different. For an element $$((b_1\ldots b_n,c_1\ldots c_m),(d_1\ldots d_k,e_1\ldots e_p))\in D_{a_n}$$
the set of its ``neighbors'' consists of the following four elements:
\begin{itemize}
\item $((b_1\ldots b_{n-1},c_1\ldots c_m),(d_1\ldots d_kb_n,e_1\ldots e_p))$;
\item $((b_1\ldots b_{n},c_1\ldots c_{m-1}),(d_1\ldots d_k,e_1\ldots e_pc_m))$;
\item $((b_1\ldots b_{n}d_k,c_1\ldots c_m),(d_1\ldots d_{k-1},e_1\ldots e_p))$;
\item $((b_1\ldots b_{n},c_1\ldots c_me_p),(d_1\ldots d_k,e_1\ldots e_{p-1}))$.
\end{itemize}
Hence if we formalize the preceeding arguments, then we obtain that for each $n\in B$ there exist words $b_n,c_n,d_n,e_n\in F_{k}$ and a word $x_n\in F_k$ with $|x_n|=1$ such that at least one of the following cases hold:
\begin{enumerate}
\item $((b_nx_n,c_n),(d_n,e_n))\in U\cap D_{a_n}\quad$ and $\quad((b_n,c_n),(d_nx_n,e_n))\in D_{a_n}\setminus U;$

\item $((b_n,c_nx_n),(d_n,e_n))\in U\cap D_{a_n}\quad$ and $\quad ((b_n,c_n),(d_n,e_nx_n))\in D_{a_n}\setminus U;$

\item $((b_n,c_n),(d_nx_n,e_n))\in U\cap D_{a_n}\quad$ and $\quad ((b_nx_n,c_n),(d_n,e_n))\in D_{a_n}\setminus U;$

\item $((b_n,c_n),(d_n,e_nx_n))\in U\cap D_{a_n}\quad$ and $\quad ((b_n,c_nx_n),(d_n,e_n))\in D_{a_n}\setminus U.$
\end{enumerate}
The Pigeonhole principle implies that there exist an infinite subset $C\subset B$ and $i\in \{1,2,3,4\}$ such that for each $n\in C$ case $i$ holds. Since the free monoid $F_{k}$ contains precisely $k$ words of length $1$, using one more time the Pigeonhole principle we can find an infinite subset $D\subset C$ such that the words $x_n$ and $x_m$ coincide, whenever $n,m\in D$. For each $n\in D$ let us denote $x_n$ simply by $x$.
At this point we have four possibilities:

$(i=1)$ In this case for each $n\in D$
$$((b_nx,c_n),(d_n,e_n))\in U\cap D_{a_n}\quad \hbox{ and }\quad ((b_n,c_n),(d_nx,e_n))\notin U\cap D_{a_n}.$$
Consider the product $((\eps,x),(x,\eps))\cdot 0=0$. Since the semigroup $\mathcal{M}_k$ is semitopological, there exists an open neighborhood $V\subset U$ of $0$ such that $((\eps,x),(x,\eps))\cdot V\subset U$. Observe that for each $n\in D$
$$((\eps,x),(x,\eps))\cdot ((b_nx,c_n),(d_n,e_n))=((b_n,c_n),(d_nx,e_n))\notin U.$$
It follows that $((b_nx,c_n),(d_n,e_n))\notin V$ for each $n\in D$.
Since the elements $((b_nx,c_n),(d_n,e_n))$, $n\in D$ belong to different $\mathscr D$-classes, we get that
the set $T=\{((b_nx,c_n),(d_n,e_n)):n\in D\}$ is infinite. Moreover, $T$ is contained in $U\setminus V$ which contradicts to the fact that the set $U\setminus V$ is finite.

$(i=2)$ In this case for each $n\in D$
$$((b_n,c_nx),(d_n,e_n))\in U\cap D_{a_n}\quad \hbox{ and }\quad ((b_n,c_n),(d_n,e_nx))\notin U\cap D_{a_n}.$$
Consider the product $0\cdot ((x,\eps),(\eps,x))=0$. There exists an open neighborhood $V\subset U$ of $0$ such that $V\cdot((x,\eps),(\eps,x))\subset U$. Observe that for each $n\in D$
$$((b_n,c_nx),(d_n,e_n))\cdot((x,\eps),(\eps,x))=((b_n,c_n),(d_n,e_nx))\notin U.$$
Hence the infinite set $\{((b_n,c_nx),(d_n,e_n)):n\in D\}$ is contained in $U\setminus V$ which implies a contradiction.

$(i=3)$ In this case for each $n\in D$
$$((b_n,c_n),(d_nx,e_n))\in U\cap D_{a_n}\quad \hbox{ and }\quad ((b_nx,c_n),(d_n,e_n))\notin U\cap D_{a_n}.$$
Consider the product $((x,\eps),(\eps,x))\cdot 0=0$. There exists an open neighborhood $V\subset U$ of $0$ such that $((x,\eps),(\eps,x))\cdot V\subset U$. Observe that for each $n\in D$
$$((x,\eps),(\eps,x))\cdot((b_n,c_n),(d_nx,e_n))=((b_nx,c_n),(d_n,e_n))\notin U.$$
Hence the infinite set $\{((b_n,c_n),(d_nx,e_n)):n\in D\}$ is contained in $U\setminus V$ which implies a contradiction.

$(i=4)$ In this case for each $n\in D$
$$((b_n,c_n),(d_n,e_nx))\in U\cap D_{a_n}\quad \hbox{ and }\quad ((b_n,c_nx),(d_n,e_n))\notin U\cap D_{a_n}.$$
Consider the product $0\cdot ((\eps,x),(x,\eps))=0$. There exists an open neighborhood $V\subset U$ of $0$ such that $V\cdot ((\eps,x),(x,\eps))\subset U$. Observe that for each $n\in D$
$$((b_n,c_n),(d_n,e_nx))\cdot((\eps,x),(x,\eps))=((b_n,c_nx),(d_n,e_n))\notin U.$$
Hence the infinite set $\{((b_n,c_n),(d_n,e_nx)):n\in D\}$ is contained in $U\setminus V$ which implies a contradiction.

The obtained contradiction completes the proof of the proposition.
\end{proof}

\begin{theorem}\label{lc}
A locally compact semitopological semigroup $\mathcal{M}_1$ is either compact or discrete.
\end{theorem}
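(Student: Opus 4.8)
The plan is to reduce the dichotomy to a statement about which $\mathscr{D}$-classes meet a fixed compact open neighborhood of the zero. By Theorem~\ref{iso} every non-zero element of $\mathcal{M}_1$ is isolated, so $0$ is the only possible non-isolated point; hence $\mathcal{M}_1$ is discrete precisely when $0$ is isolated. Thus I would assume that $0$ is \emph{not} isolated and aim to prove that $\mathcal{M}_1$ is compact. First I note that, since every non-zero point is isolated, the complement of any open neighborhood of $0$ consists of isolated points and is therefore open; so every open neighborhood of $0$ is clopen. Using local compactness I would pick a compact neighborhood of $0$ and replace it by its interior $U$, which is then a compact \emph{open} neighborhood of $0$, being a closed subset of a compact set. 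Because $0$ is not isolated, $U$ is infinite, and as each $\mathscr{D}$-class $D_{a^n}$ is finite by Lemma~\ref{l2}, the index set $A_U=\{n:D_{a^n}\cap U\neq\emptyset\}$ is infinite.

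Next I would invoke Proposition~\ref{pro} with $k=1$: the set $E=\{n\in A_U:D_{a^n}\not\subseteq U\}$ is finite, so $N_1:=A_U\setminus E=\{n:D_{a^n}\subseteq U\}$ is infinite. Writing $N_0:=\mathbb{N}\setminus A_U=\{n:D_{a^n}\cap U=\emptyset\}$, the whole statement reduces to the \textbf{key claim} that $N_0$ is finite; granting this, $\mathcal{M}_1\setminus U$ is contained in the finite union $\bigcup_{n\in E\cup N_0}D_{a^n}$ of finite classes, hence is finite, and $\mathcal{M}_1=U\cup(\mathcal{M}_1\setminus U)$ is compact. To prove the claim I would argue by contradiction, assuming $N_0$ infinite. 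Then $N_1$ and its complement are both infinite, so there are infinitely many $n\in N_1$ with $n+1\notin N_1$; since $E$ is finite, infinitely many of these satisfy $n\in N_1$ and $n+1\in N_0$.

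The heart of the argument is a single fixed element that shifts the $\mathscr{D}$-class index upward by one. Put $z=((\eps,a),(a,\eps))$ and $e_n=((\eps,\eps),(a^n,a^n))\in D_{a^n}$; a direct computation gives $z\cdot e_n=((\eps,a),(a^{n+1},a^n))\in D_{a^{n+1}}$. For each of the infinitely many indices $n$ with $n\in N_1$ and $n+1\in N_0$ we then have $e_n\in D_{a^n}\subseteq U$ while $z\cdot e_n\in D_{a^{n+1}}$ lies in a class disjoint from $U$, so $z\cdot e_n\notin U$. Since $z\cdot 0=0\in U$ and the left shift $l_z$ is continuous, there is an open neighborhood $V\subseteq U$ of $0$ with $z\cdot V\subseteq U$; the relation $z\cdot e_n\notin U$ forces $e_n\notin V$, whence $e_n\in U\setminus V$. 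These $e_n$ lie in distinct $\mathscr{D}$-classes, so $U\setminus V$ is infinite, contradicting the fact that compactness of $U$ makes $U\setminus V$ finite for every open $V\ni 0$ inside $U$. This contradiction proves the claim. The main obstacle, and the place where the hypothesis $\lambda=1$ is essential, is precisely this last step: it works because over a one-letter alphabet the classes form a single chain indexed by $\mathbb{N}$ and one fixed element $z$ moves each idempotent $e_n$ to the next class, so that interleaving $N_0$ and $N_1$ becomes impossible—an arrangement that genuinely fails for $\mathcal{M}_2$.
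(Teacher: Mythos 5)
Your proof is correct and follows essentially the same route as the paper's: both fix a compact open neighborhood $U$ of $0$, invoke Proposition~\ref{pro} and Lemma~\ref{l2}, and rule out infinitely many $\mathscr{D}$-classes missing $U$ by combining a fixed class-shifting element with continuity of a left shift at $0$ and the finiteness of $U\setminus V$ for open $V\ni 0$ inside the compact set $U$. The only cosmetic differences are your choice of the shifting element $((\eps,a),(a,\eps))$ in place of the paper's $((a,a),(\eps,\eps))$, and that you reach the contradiction via infinitely many idempotents $e_n$ landing in $U\setminus V$, whereas the paper pushes a single class contained in $V$ outside of $U$.
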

\begin{proof}
If we assume that the free monoid $F_1$ is taken over the singleton $\{a\}$, then it is easy to see that non-zero $\mathscr D$-classes in $\mathcal{M}_1$ have the form $D_{a^n}$, $n\in \mathbb{N}$.
Assuming that $\mathcal{M}_1$ is not discrete, fix any compact infinite open neighborhood $U$ of $0$. Let $A=\{n\in \mathbb{N}: D_{a^n}\cap U\neq\emptyset\}$. Since $\mathscr{D}$-classes are finite (see Lemma~\ref{l2}), the set $A$ is infinite. To derive a contradiction, assume that the set $B=\mathbb{N}\setminus A$ is infinite as well. In this case there exists an infinite subset $\{n_k:k\in\mathbb{N}\}\subset A$ such that $\{n_k+1:k\in\mathbb{N}\}\subset B$.
Consider the product $((a,a),(\eps,\eps))\cdot 0=0$. The continuity of left shifts in $\mathcal{M}_1$ implies that there exists an open neighborhood $V\subset U$ of $0$ such that
$((a,a),(\eps,\eps))\cdot V\subset U$. Proposition~\ref{pro} ensures that $D_{a^{n_k}}\subset U$ for all but finitely many $k\in \mathbb{N}$. Since the set $U\setminus V$ is finite, there exists $k\in \mathbb{N}$ such that $D_{a^{n_k}}\subset V$, in particular, $((\eps,\eps),(a^{n_k},a^{n_k}))\in V$. It follows that
$$((a,a),(\eps,\eps))\cdot((\eps,\eps),(a^{n_k},a^{n_k}))=((a,a),(a^{n_k},a^{n_k}))\in D_{a^{n_k+1}}\subset \mathcal{M}_1\setminus U.$$
The obtained contradiction implies that the set $B=\mathbb{N}\setminus A$ is finite. By Proposition~\ref{pro}, all but finitely many
$\mathscr D$-classes are contained in $U$. Lemma~\ref{l2} implies that the set $\mathcal{M}_1\setminus U$ is finite. Hence the semitopological semigroup $\mathcal{M}_1$ is compact.
\end{proof}

Since the free inverse semigroup over a singleton with adjoined zero is isomorphic to the semigroup $\mathcal{M}_1$, Theorem~\ref{lc} implies the following:

\begin{corollary}
A locally compact semitopological free inverse semigroup over a singleton with adjoined zero is either compact or discrete.
\end{corollary}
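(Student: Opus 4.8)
The plan is to reduce the statement directly to Theorem~\ref{lc} via the algebraic isomorphism recorded in the introduction. Recall (see~\cite{L}) that the free inverse semigroup over a singleton with adjoined zero is isomorphic, \emph{as an abstract semigroup}, to $\mathcal{M}_1$. Write $S$ for this semigroup and fix a semigroup isomorphism $\theta\colon S\to\mathcal{M}_1$. The whole content of the corollary is carried by Theorem~\ref{lc}; the only thing I need to verify is that the dichotomy transports across $\theta$.

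First I would note that an algebraic isomorphism transports a semitopological semigroup structure to another. Suppose $\sigma$ is a locally compact shift-continuous topology on $S$, and let $\tau=\{\theta(W):W\in\sigma\}$ be the pushforward topology, so that $\theta\colon(S,\sigma)\to(\mathcal{M}_1,\tau)$ is by construction a homeomorphism. Since $\theta$ is a semigroup homomorphism, for each $x\in\mathcal{M}_1$ the shift $l_x$ on $(\mathcal{M}_1,\tau)$ satisfies $l_x=\theta\circ l_{\theta^{-1}(x)}\circ\theta^{-1}$, and likewise for $r_x$; being a composite of continuous maps, $l_x$ and $r_x$ are continuous, so $(\mathcal{M}_1,\tau)$ is a semitopological semigroup. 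Local compactness is a purely topological property preserved by the homeomorphism $\theta$, so $(\mathcal{M}_1,\tau)$ is locally compact as well.

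Then I would apply Theorem~\ref{lc} to $(\mathcal{M}_1,\tau)$, concluding that it is either compact or discrete, and transport this back along the homeomorphism $\theta^{-1}$ to deduce that $(S,\sigma)$ is either compact or discrete. I expect no real obstacle here: the argument is entirely an invariance-under-isomorphism observation, and the single point worth stating explicitly is that the isomorphism $S\cong\mathcal{M}_1$ is \emph{algebraic}, so one must (trivially) check that transporting the topology preserves both the shift-continuity of the multiplication and the compactness-type properties, which it does since $\theta$ is simultaneously a semigroup isomorphism and a homeomorphism.
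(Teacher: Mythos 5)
Your proposal is correct and is essentially the paper's own argument: the corollary is stated there as an immediate consequence of Theorem~\ref{lc} via the isomorphism of the free inverse semigroup over a singleton with adjoined zero with $\mathcal{M}_1$. You merely spell out the (routine) transport of the shift-continuous locally compact topology across that isomorphism, which the paper leaves implicit.
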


\begin{lemma}\label{ls}
Each two-sided ideal $I\subset F_{\lambda}$ generates a locally compact inverse semigroup topology $\tau_I$ on $\mathcal{M}_{\lambda}$.
\end{lemma}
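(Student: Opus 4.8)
The plan is to construct the topology $\tau_I$ explicitly from the ideal $I$, mimicking the construction of the compact topology $\tau$ in Theorem~\ref{c}, but now declaring isolated only those $\mathscr{D}$-classes $D_p$ whose associated word $p$ lies in $I$, and letting the remaining classes accumulate at $0$ in a controlled way. Concretely, for each $\mathscr{D}$-class $D_p$ with $p\in I$ I would declare every point of $D_p$ isolated, while the neighborhood base at $0$ would be taken to be the family $\{U_A : A\subset I\text{ finite}\}$ where $U_A=\mathcal{M}_{\lambda}\setminus(\cup_{a\in A}D_a)$, exactly as before but with $A$ ranging only over finite subsets of $I$ rather than of all of $F_{\lambda}\setminus\{\eps\}$. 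The effect is that the $\mathscr{D}$-classes indexed by $I$ behave as in the compact topology, while those indexed by $F_{\lambda}\setminus I$ are forced into every neighborhood of $0$; since each such class is finite by Lemma~\ref{l2}, the point $0$ still has a countable-type local structure but the space need not be compact.

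\textbf{First I would} verify that $\tau_I$ is a well-defined Hausdorff topology: the only subtlety is at $0$, and Hausdorffness is immediate since each nonzero point is isolated. \textbf{Next I would} establish local compactness. The key observation is that for any finite $A\subset I$ the closure of $U_A$ and its compactness hinge on how many $\mathscr{D}$-classes meet $U_A$; I expect that a natural compact neighborhood of $0$ is $U_A$ for a suitable choice of $A$, and its compactness will follow from the fact that every open cover of $0$ already covers all but finitely many points, because the classes $D_p$ with $p\in I\setminus A$ are the only nonisolated-neighborhood-forcing ones. Here the ideal hypothesis on $I$ enters crucially: because $I$ is a two-sided ideal in $F_{\lambda}$, whenever $p\in I$ and $s,r\in F_{\lambda}$ we have $spr\in I$, which is precisely what is needed to make the sets $U_A$ (for $A\subset I$) two-sided ideals of $\mathcal{M}_{\lambda}$, via Proposition~\ref{p1}.

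\textbf{The main technical step} is continuity of the semigroup operation and of inversion at $0$, and this is where the ideal property does the real work. Following the pattern of Theorem~\ref{c}, given a basic neighborhood $U_A$ of $0$ with $A\subset I$ finite, I would produce a basic neighborhood $U_B$ with $B\subset I$ finite such that $x\cdot U_B\cup U_B\cdot x\cup U_B\cdot U_B\subset U_A$ for every nonzero $x$. The candidate is $B=\{b\in I : b\text{ is a subword of some }a_i\in A\}\setminus\{\eps\}$; one then checks, exactly as in Theorem~\ref{c} but invoking that $I$ is a two-sided ideal to guarantee $sqr\in I\setminus B$ lands outside $B$, that $U_B$ is a two-sided ideal of $\mathcal{M}_{\lambda}$. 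Inversion is handled by the symmetry $D_a^{-1}=D_a$, giving $U_A^{-1}=U_A$. \textbf{I expect the main obstacle} to be confirming that the products $x\cdot U_B$, $U_B\cdot x$ remain inside $U_A$ for the isolated classes indexed by $p\in I$ as well; this requires checking that multiplication cannot move a point of $D_q\subset U_B$ into some forbidden class $D_{a_i}$ with $a_i\in A$, which again reduces via Proposition~\ref{p1} to the arithmetic statement that if $q\notin B$ and $sqr\in A$ then $q$ is a subword of $sqr$, hence $q\in B$, a contradiction. Once $U_B$ is shown to be an ideal, the inclusion $x\cdot U_B\cup U_B\cdot x\cup U_B\cdot U_B\subset U_B\subset U_A$ follows immediately, completing the verification that $(\mathcal{M}_{\lambda},\tau_I)$ is a topological inverse semigroup.
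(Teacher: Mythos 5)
Your construction is the paper's construction turned inside out, and it genuinely fails, for two concrete reasons. First, your topology is not Hausdorff: if $p\in F_{\lambda}\setminus(I\cup\{\eps\})$ (such $p$ exists for every proper ideal, and the ideals this lemma is later applied to, e.g.\ $I_n=F_2ab^naF_2$, are proper), then $D_p$ is contained in \emph{every} basic neighborhood $U_A=\mathcal{M}_{\lambda}\setminus(\cup_{a\in A}D_a)$, because $A\subset I$ can never contain $p$; hence no point of $D_p$ can be separated from $0$, contradicting the paper's standing Hausdorffness assumption. Second, separate continuity of multiplication fails, and the hole is exactly in your ``arithmetic statement'': from $q\notin B$ and $sqr\in A$ you conclude $q\in B$, but your $B$ consists only of subwords of elements of $A$ \emph{that lie in $I$}, while $q$ need not lie in $I$. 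Concretely, take $\lambda=2$, $I=F_2aF_2$, $x=((ab,ab),(\eps,\eps))\in D_{ab}$ and $y=((b,b),(b,b))\in D_{bb}$. Since $bb\notin I$, the point $y$ lies in every one of your neighborhoods $U_B$ of $0$; but $x\cdot y=((ab,ab),(b,b))\in D_{abb}$ with $abb\in I$, so taking $A=\{abb\}$, no $U_B$ satisfies $x\cdot U_B\subset U_A$, i.e.\ the left shift by $x$ is discontinuous at $0$ and your $\tau_I$ is not even shift-continuous. The root cause is that ideals \emph{absorb}: multiplication can move a class $D_q$ with $q\notin I$ into a class $D_{sqr}$ with $sqr\in I$, so gluing the classes indexed outside $I$ to $0$ while isolating the classes indexed by $I$ is incompatible with continuity.

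The paper's construction is the opposite one: all non-zero points are isolated, and the neighborhoods of $0$ are $U_A=\bigcup_{a\in A}D_a\cup\{0\}$, where $A$ ranges over the \emph{cofinite} subsets of $I$. Thus $0$ is a limit only of the $\mathscr{D}$-classes indexed by $I$, while the classes indexed outside $I$ are clopen discrete pieces far from $0$. Hausdorffness is then clear, and each $U_A$ is compact, since any open set containing $0$ contains some $U_{A'}$ and therefore omits at most finitely many of the finite (Lemma~\ref{l2}) classes $D_a$ composing $U_A$. With this orientation the absorption property of $I$ works \emph{for} you rather than against you: given $U_A$ with $B=I\setminus A$ finite, remove from $I$ the finitely many non-empty subwords of elements of $B$ to get a smaller ideal $J\subset A$ of $F_{\lambda}$; Proposition~\ref{p1} then shows that $U_J=\bigcup_{a\in J}D_a\cup\{0\}$ is a two-sided ideal of $\mathcal{M}_{\lambda}$ contained in $U_A$, and $U_J^{-1}=U_J$, which gives continuity of multiplication and inversion at $0$.
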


\begin{proof}
We define $\tau_I$ as follows: each non-zero element of $(\mathcal{M}_{\lambda},\tau_I)$ is isolated and an open neighborhood base at $0$ consists of the sets $U_A=\cup_{a\in A}{D_a}\cup\{0\}$, where $A\subset I$, $|I\setminus A|<\omega$. Since $\mathscr D$-classes are finite in $\mathcal{M}_{\lambda}$ it is easy to see that the space $(\mathcal{M}_{\lambda},\tau_I)$ is locally compact and Hausdorff.  Note that $U_A^{-1}=U_A$ for each $A\subset I$. Hence the inversion is continuous in $(\mathcal{M}_{\lambda},\tau_I)$. Fix any $A\subset I$ such that the set $B=I\setminus A$ is finite. Enumerate $B=\{b_1,\ldots, b_n\}$ and let
$$C=\{x\in F_{\lambda}: x \hbox{ is a subword of }b_i\hbox{ for some }i\leq n\}\setminus\{\eps\}.$$
Note that the set $C$ is finite and put $J=I\setminus C$. One can easily check that $J$ is a two-sided ideal in $F_{\lambda}$. We claim that $U_J$ is a two sided ideal in $\mathcal{M}_{\lambda}$. Indeed, fix any $((a,b),(c,d))\in\mathcal{M}_{\lambda}$ and $((x,y),(u,v))\in D_p\subset U_J$. Note that $p\in J$. Proposition~\ref{p1} implies that either ${\bf z}=((a,b),(c,d))\cdot ((x,y),(u,v))=0\in U_J$ or there exist words $s,r\in F_{\lambda}$ such that ${\bf z}\in D_{spr}$. Since $J$ is a two-sided ideal in $F_{\lambda}$, the word $spr$ belongs to $J$ which implies that ${\bf z}\in D_{spr}\subset U_J$. Hence $U_J$ is a left ideal in $\mathcal{M}_{\lambda}$. Similarly it can be shown that $U_J$ is a right ideal. Thus, for each $((a,b),(c,d))\in\mathcal{M}_{\lambda}$ we have that $$((a,b),(c,d))\cdot U_J\cup U_J\cdot ((a,b),(c,d))\cup U_J\cdot U_J\subset U_J\subset U_A.$$ It follows that $(\mathcal{M}_{\lambda},\tau_I)$ is a topological inverse semigroup.
\end{proof}

A family $\mathcal{A}$ of infinite subsets of $\mathbb{N}$ is called {\em almost disjoint} if $|A\cap B|<\omega$ for each $A,B\in \mathcal{A}$.
By Theorem 1.3 from~\cite{Kun}, there exists an almost disjoint family $\mathcal{A}\subset \mathcal{P}(\mathbb{N})$ of cardinality continuum.
The next proposition shows that Theorem~\ref{lc} cannot be generalized for the McAlister semigroup $\mathcal{M}_{2}$.

\begin{proposition}
The semigroup $\mathcal{M}_2$ admits continuum many distinct locally compact inverse semigroup topologies.
\end{proposition}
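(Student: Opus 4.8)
The plan is to leverage Lemma~\ref{ls}, which already shows that each two-sided ideal $I\subset F_{\lambda}$ generates a locally compact inverse semigroup topology $\tau_I$ on $\mathcal{M}_{\lambda}$. Thus it suffices to exhibit continuum many two-sided ideals $I\subset F_2$ that yield pairwise distinct topologies. First I would fix a way to encode subsets of $\mathbb{N}$ as words in $F_2$. Writing $F_2$ over the alphabet $\{a_0,a_1\}$, I would associate to each $n\in\mathbb{N}$ a distinguished word $w_n\in F_2$, for instance $w_n=a_0^n a_1$ (so that the $w_n$ are pairwise distinct and no $w_n$ is a subword of $w_m$ for $n\neq m$, which keeps the encoding clean). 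To each infinite set $X\subset\mathbb{N}$ I would then attach the two-sided ideal
$$I_X=\{u\in F_2\setminus\{\eps\}: w_n \text{ is a subword of } u \text{ for some } n\in X\}.$$
Each $I_X$ is clearly a two-sided ideal of $F_2$ because the property of containing some $w_n$ as a subword is preserved under left and right concatenation.

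Next I would invoke the almost disjoint family $\mathcal{A}\subset\mathcal{P}(\mathbb{N})$ of cardinality continuum guaranteed by Theorem~1.3 from~\cite{Kun}, and consider the topologies $\tau_{I_X}$ for $X\in\mathcal{A}$. The key point to verify is that distinct members of $\mathcal{A}$ give rise to \emph{distinct} topologies on $\mathcal{M}_2$. Since all these topologies agree off the point $0$ (every non-zero element is isolated in each), they can differ only in their neighborhood filters at $0$. So I would compare, for $X\neq Y$ in $\mathcal{A}$, the neighborhood bases at $0$ determined by $I_X$ and $I_Y$. Because $\mathcal{A}$ is almost disjoint, the symmetric difference $X\setminus Y$ is infinite, and I would use this to produce a neighborhood of $0$ in one topology that is not a neighborhood in the other.

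The main obstacle, and the step needing the most care, is translating almost-disjointness of $X,Y$ in $\mathbb{N}$ into a genuine separation of the two topologies. Concretely: pick $n\in X\setminus Y$ and look at the $\mathscr D$-class $D_{w_n}$. In $\tau_{I_X}$ the sets $U_A$ forming the base at $0$ are cofinite in $I_X$ (missing only finitely many $\mathscr D$-classes), so a basic $\tau_{I_X}$-neighborhood must contain $D_{w_n}$ for all but finitely many $n\in X$; whereas in $\tau_{I_Y}$ there is a basic neighborhood of $0$ that is disjoint from every $D_{w_n}$ with $n\in Y$, and since $w_n\notin I_Y$ for $n\in X\setminus Y$ one can arrange a $\tau_{I_Y}$-open set around $0$ avoiding the relevant $D_{w_n}$'s. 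The delicate point is that a single word $u$ may contain several $w_n$ as subwords, so $I_X$ and $I_Y$ are not literally indexed by disjoint sets of words even when $X\cap Y$ is finite; this is precisely why I chose the $w_n$ to be pairwise non-subwords and why the encoding must be checked to ensure $w_n\in I_X \iff n\in X$ for the generating words. Once that bookkeeping is pinned down, the almost-disjointness of $\mathcal{A}$ forces, for each pair $X\neq Y$, a $\mathscr D$-class that is eventually swallowed by one neighborhood base but excludable from the other, so $\tau_{I_X}\neq\tau_{I_Y}$. Since $|\mathcal{A}|=\mathfrak{c}$, this produces continuum many distinct locally compact inverse semigroup topologies on $\mathcal{M}_2$, completing the proof.
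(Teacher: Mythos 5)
Your overall strategy is the paper's own: feed an almost disjoint family through Lemma~\ref{ls} by attaching to each $X\subset\mathbb{N}$ the ideal of all words containing some marker word $w_n$, $n\in X$, as a subword, and then separate the resulting topologies at the neighborhood filter of $0$. However, your concrete encoding fails, and it fails exactly at the point you yourself flag as the one needing care. With $w_n=a_0^na_1$ the markers are \emph{not} pairwise subword-incomparable: for $n<m$ we have $a_0^ma_1=a_0^{m-n}\cdot(a_0^na_1)$, so $w_n$ is a subword (indeed a suffix) of $w_m$. Consequently a word $u$ lies in $I_X$ iff $u$ contains a run of at least $\min X$ letters $a_0$ immediately followed by $a_1$; that is, $I_X=I_{\{\min X\}}$ depends only on $\min X$. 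Your family $\{I_X:X\in\mathcal{A}\}$ therefore contains only countably many distinct ideals, and the equivalence $w_n\in I_X\iff n\in X$, on which your separation argument rests, is false whenever $X$ contains an element smaller than $n$.

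The gap is repairable by choosing genuinely incomparable markers, which is precisely what the paper does: it takes the words $ab^na$ (in your notation $a_1a_0^na_1$), for which $ab^na$ is a subword of $ab^ma$ iff $n=m$, so that $ab^na\in J_A:=\bigcup_{m\in A}F_2\,ab^ma\,F_2$ iff $n\in A$. Almost disjointness of $\mathcal{A}$ then makes $J_A\setminus J_B$ infinite for distinct $A,B\in\mathcal{A}$, which, exactly as in your last paragraph, shows that the largest basic $\tau_{J_B}$-neighborhood of $0$ fails to be a $\tau_{J_A}$-neighborhood, so the topologies are distinct. (One further slip in your separation step: a basic $\tau_{I_Y}$-neighborhood of $0$ avoids the classes $D_{w_n}$ with $n\notin Y$, not those with $n\in Y$; as written the clause asserts the opposite of what you need, though the sentence that follows shows you intended the right statement.)
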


\begin{proof}
Assume that the free monoid $F_{2}$ is taken over the set $\{a,b\}$ and consider the ideals $I_n=F_2 ab^na F_{2}\subset F_2$, $n\in\mathbb{N}$. Note that $ab^na\in I_m$ iff $n=m$ for each $n,m\in\mathbb{N}$. Let $\mathcal{A}$ be any almost disjoint family of infinite subsets of $\mathbb{N}$ of cardinality continuum. For each $A\in \mathcal{A}$ let $J_A=\cup_{n\in A}I_n$ and note that $J_A$ is an ideal in $F_2$. We claim that the set $J_A\setminus J_B$ is infinite for each distinct $A,B\in \mathcal{A}$. Indeed, consider the infinite subset $C=\{ab^na:n\in A\}\subset J_A$. Recall that $ab^na\in J_B$ iff $n\in B$. Since the sets $A$ and $B$ are almost disjoint the set $C\setminus J_B\subset J_A\setminus J_B$ is infinite.
Therefore, for each distinct $A,B\in \mathcal{A}$ the locally compact topologies $\tau_{J_A}$ and $\tau_{J_B}$ (see Lemma~\ref{ls}) are distinct. Recall that by Proposition~\ref{pr}, there exist only four different automorphisms of $\mathcal{M}_2$. The Pigeonhole principle implies that we can find a subfamily $\{\tau_{\alpha}:\alpha\in\mathfrak{c}\}\subset \{\tau_{J_A}:A\in\mathcal{A}\}$ such that for each $\alpha\neq \beta$ the locally compact topological inverse semigroups $(\mathcal{M}_2,\tau_{\alpha})$ and $(\mathcal{M}_2,\tau_{\beta})$ are not topologically isomorphic.
\end{proof}

The next lemma provides a method of constructing locally compact inverse semigroup topologies on $\mathcal{M}_{\lambda}$, $\lambda\geq \omega$. Also, it shows that Proposition~\ref{pro} does not hold in the case $\lambda\geq \omega$.

\begin{lemma}
For any infinite cardinal $\lambda$ there exists a locally compact inverse semigroup topology on $\mathcal{M}_{\lambda}$ such that $0$ has an infinite compact neighborhood $U$ intersecting each $\mathscr D$-class by at most one element.
\end{lemma}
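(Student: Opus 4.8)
The plan is to realize the desired topology by the recipe of Theorem~\ref{c} and Lemma~\ref{ls}: declare every non-zero point isolated and build a neighbourhood base at $0$ out of a carefully chosen infinite transversal of $\mathscr D$-classes. Assuming that $F_{\lambda}$ is the free monoid over $\{a_{\alpha}:\alpha\in\lambda\}$, for each $\alpha\in\lambda$ I would set $e_{\alpha}=((a_{\alpha},a_{\alpha}),(a_{\alpha},a_{\alpha}))$. Each $e_{\alpha}$ is an idempotent lying in the $\mathscr D$-class $D_{a_{\alpha}a_{\alpha}}$, and these classes are pairwise distinct, so $S=\{e_{\alpha}:\alpha\in\lambda\}$ is a set of cardinality $\lambda$ meeting every $\mathscr D$-class in at most one point. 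I then define the topology $\tau$ on $\mathcal{M}_{\lambda}$ by letting every non-zero point be isolated and taking $\{\{0\}\cup(S\setminus F):F\subseteq S \text{ finite}\}$ as a neighbourhood base at $0$; the witnessing neighbourhood of the statement is $U=\{0\}\cup S$.

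The purely topological requirements are immediate. The space is Hausdorff, and it is locally compact because isolated points have the compact neighbourhood $\{x\}$, while $U=\{0\}\cup S$ is compact since any open set containing $0$ omits only finitely many points of $S$. As $\lambda$ is infinite, $U$ is infinite and, by construction, meets each $\mathscr D$-class in at most one element, which is exactly the extra property demanded. Continuity of the inversion reduces to its behaviour at $0$, and here it suffices that $S$ is closed under inversion; this holds because each $e_{\alpha}$ is self-inverse, indeed $e_{\alpha}^{-1}=((a_{\alpha},a_{\alpha}),(a_{\alpha},a_{\alpha}))=e_{\alpha}$, so $(\{0\}\cup(S\setminus F))^{-1}=\{0\}\cup(S\setminus F)$.

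Continuity of multiplication is the crux. Away from $0$ every point is isolated, so the operation is automatically continuous at a pair $(z_1,z_2)$ unless $z_1z_2=0$ or one of $z_1,z_2$ equals $0$; in each such case the verification reduces to a single claim: \emph{for every non-zero $y\in\mathcal{M}_{\lambda}$ there is at most one index $\alpha$ with $e_{\alpha}\cdot y\neq 0$, and likewise at most one with $y\cdot e_{\alpha}\neq 0$}. Granting the claim, for any basic neighbourhood $W=\{0\}\cup(S\setminus F)$ of $0$ one handles the pair $(0,y)$ by taking $V=\{0\}\cup(S\setminus(F\cup\{e_{\alpha_0}\}))$, where $\alpha_0$ is the exceptional index, so that $V\cdot y=\{0\}\subseteq W$; the pair $(x,0)$ is symmetric, the pair $(0,0)$ is settled by $e_{\alpha}e_{\beta}=e_{\alpha}$ if $\alpha=\beta$ and $e_{\alpha}e_{\beta}=0$ otherwise (whence $W\cdot W=W$), and a pair with $z_1z_2=0$ and both factors non-zero is covered by singleton neighbourhoods. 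To prove the claim I would write $y=((x,x'),(u,u'))$ and note that $e_{\alpha}\cdot y$ is non-zero only if, coordinatewise in $\mathcal{P}_{\lambda}$, the single letter $a_{\alpha}$ is suffix-comparable with both $x$ and $u$. Since $x u^{t}\neq\eps$, at least one of $x,u$ is non-empty; if $x\neq\eps$ then suffix-comparability with $x$ forces $a_{\alpha}$ to be the last letter of $x$ (as $|a_{\alpha}|=1$), pinning $\alpha$ to a single value, and the case $u\neq\eps$ is identical. The statement for $y\cdot e_{\alpha}$ then follows by applying the $e_{\alpha}$-case to $y^{-1}$ together with $y\cdot e_{\alpha}=0\iff e_{\alpha}\cdot y^{-1}=0$.

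The main obstacle is precisely this last claim: one must choose the transversal $S$ so that multiplication by a fixed element annihilates all but finitely many (here, at most one) of its members. The doubled idempotents $e_{\alpha}$ achieve this because requiring suffix-comparability in both polycyclic coordinates, combined with $x u^{t}\neq\eps$, always leaves one coordinate that genuinely constrains $\alpha$; a naive transversal such as $((\eps,\eps),(a_{\alpha},a_{\alpha}))$ fails, since multiplying by an element with an empty coordinate can keep infinitely many products non-zero.
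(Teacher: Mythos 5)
Your proof is correct and is essentially the paper's own construction: the paper also isolates all non-zero points and takes as a neighborhood base at $0$ the co-small subsets of the transversal $\{((a_{\alpha},a_{\alpha}),(a_{\alpha},a_{\alpha}))\}\cup\{0\}$, proving continuity of multiplication by exactly your ``last letter pins down the index'' argument. The only (immaterial) difference is that the paper indexes its transversal by $\omega\subset\lambda$ and uses the tail sets $U_n=\{((a_k,a_k),(a_k,a_k)):n<k<\omega\}\cup\{0\}$ as the base at $0$, whereas you use all $\lambda$ indices with cofinite complements, so your compact neighborhood of $0$ has cardinality $\lambda$ rather than $\omega$.
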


\begin{proof}
Assume that the free monoid $F_{\lambda}$ is taken over the set $\{a_{\alpha}:\alpha\in\lambda\}$. For each $n\in \omega\subset \lambda$ put $U_n=\{((a_k,a_k),(a_k,a_k)):n<k<\omega\}\cup\{0\}$. Let $\tau$ be the topology on $\mathcal{M}_{\lambda}$ defined as follows: each non-zero element of $\mathcal{M}_{\lambda}$ is isolated and the family $\{U_n:n\in\omega\}$ forms an open neighborhood base at $0$. Obviously, the space $(\mathcal{M}_{\lambda},\tau)$ is Hausdorff and locally compact ($U_0$ is a compact neighborhood of $0$). Let us check that $\tau$ is an inverse semigroup topology. Since $U_n^{-1}=U_n$ the inversion is continuous. To prove the continuity of the multiplication in $(\mathcal{M}_{\lambda},\tau)$ it is sufficient to consider the following cases:
\begin{enumerate}
\item $0\cdot 0=0$;
\item $x\cdot 0=0$, where $x\in\mathcal{M}_{\lambda}\setminus\{0\}$;
\item $0\cdot x=0$, where $x\in\mathcal{M}_{\lambda}\setminus\{0\}$.
\end{enumerate}

1) Note that $U_n\cdot U_n=U_n$ for each $n\in\omega$. It follows that the semigroup operation is continuous in case 1.

Fix any non-zero element $x=((a,b),(c,d))\in \mathcal{M}_{\lambda}$.

2) If $b\neq \eps$, then let $p$ be the last letter of the word $b\in F_{\lambda}$. Clearly, there exists $n\in \omega$ such that $((p,p),(p,p))\notin U_n$. Then $((a,b),(c,d))\cdot U_n=\{0\}$. If $b=\eps$, then the definition of $\mathcal{M}_{\lambda}$ implies that $d\neq \eps$. Let $p$ be the last letter of the word $d\in F_{\lambda}$. There exists $n\in \omega$ such that $((p,p),(p,p))\notin U_n$. Then $((a,b),(c,d))\cdot U_n=\{0\}$,  witnessing that the semigroup operation is continuous in case 2.

3) If $a\neq \eps$, let $p$ be the last letter of the word $a\in F_{\lambda}$. Clearly, there exists $n\in \omega$ such that $((p,p),(p,p))\notin U_n$. Then $U_n\cdot ((a,b),(c,d))=\{0\}$. If $a=\eps$, then the definition of $\mathcal{M}_{\lambda}$ implies that $c\neq \eps$. Let $p$ be the last letter of the word $c\in F_{\lambda}$. There exists $n\in \omega$ such that $((p,p),(p,p))\notin U_n$. Then $U_n\cdot ((a,b),(c,d))=\{0\}$,  witnessing that the semigroup operation is continuous in case 3.

Hence $\tau$ is an inverse semigroup topology on $\mathcal{M}_{\lambda}$. Lemma~\ref{l1} ensures that for each $n,m\in\omega$,
 $((a_n,a_n),(a_n,a_n))\mathscr D((a_m,a_m),(a_m,a_m))$ iff $n=m$. It follows that $U_0$ is compact neighborhood of $0$ such that  $|U_0\cap D_{u}|\leq 1$ for each $u\in F_{\lambda}\setminus\{\eps\}$.
\end{proof}

\end{document}